\documentclass[11pt]{amsart}
\usepackage{amsmath}
\usepackage{amssymb}
\usepackage{amscd}
\usepackage{url}
\usepackage[all]{xypic}
\usepackage{pstricks}
\newcommand{\bp}{\begin{pmatrix}}
\newcommand{\ep}{\end{pmatrix}}
\newcommand{\ol}[1]{\overline{#1}}
\newcommand{\LS}{M}

\newcommand{\p}{\partial}
\newcommand{\sm}{\setminus}

\newcommand{\sph}{Sp^{\HVS}}
\newcommand{\spm}{Sp^{\MHS}}
\newcommand{\spt}{Sp^{\text{MHS}/2}}
\newcommand{\CC}{{\mathbb C}}

\newcommand{\RR}{{\mathbb R}}


\DeclareMathOperator{\nullt}{null}
\DeclareMathOperator{\lk}{lk}

\DeclareMathOperator{\sign}{signature}

\DeclareMathOperator{\MHS}{MHS}
\DeclareMathOperator{\HVS}{HVS}

\DeclareMathOperator{\Id}{Id}
\DeclareMathOperator{\intr}{int}
\DeclareMathOperator{\coker}{coker}

\numberwithin{equation}{section}
\numberwithin{equation}{subsection}

\theoremstyle{plain}
\newtheorem{theorem}[equation]{Theorem}
\newtheorem{lemma}[equation]{Lemma}
\newtheorem{proposition}[equation]{Proposition}

\theoremstyle{definition}

\newtheorem{remark}[equation]{Remark}

\newtheorem{definition}[equation]{Definition}

\newtheorem*{notation}{Notation}

\newtheorem{ass}[equation]{Assumption}

\newtheorem*{acknowledgements}{Acknowledgements}

\numberwithin{equation}{subsection}

\oddsidemargin 0pt \evensidemargin 0pt \marginparsep 10pt
\topmargin 0pt \baselineskip 14pt \textwidth 6in \textheight 9in

\newcommand{\mv}{\mathcal{V}}

\newcommand{\mw}{\mathcal{W}}

\def\Z{\mathbb Z}

\def\Q{\mathbb Q}
\def\s{\sigma}
\def\S{\Sigma}
\def\C{\mathbb C}
\def\oS{S'}
\def\oM{M'}
\def\oSig{\S'}

\def\boleaspic{\begin{pspicture}(-6,-2.5)(6,2.5)
\pscircle(-3,0){2.0}
\psbezier(-1,2)(-1.5,2.3)(-1.5,1)(-2.5,1)
\psbezier(-2.5,1)(-1.5,1)(-1.5,0.0)(-2.8,0.0)
\psbezier(-2.8,0.0)(-1.5,0.0)(-1.5,-1)(-2.5,-1)
\psbezier(-2.5,-1)(-1.5,-1)(-1.5,-2.3)(-1,-2)
\rput(-1,-1.8){\psscalebox{0.8}{$X$}}
\pscircle[fillcolor=black,fillstyle=solid](-2.5,1){0.05}
\pscircle[fillcolor=black,fillstyle=solid](-2.8,0){0.05}
\pscircle[fillcolor=black,fillstyle=solid](-2.5,-1){0.05}
\pscircle[fillcolor=black,fillstyle=solid](-4.5,0){0.05}
\pscircle[linestyle=solid,linecolor=gray,fillstyle=none](-2.5,1){0.35}
\pscircle[linestyle=solid,linecolor=gray,fillstyle=none](-2.8,0){0.35}
\pscircle[linestyle=solid,linecolor=gray,fillstyle=none](-2.5,-1){0.35}
\pscircle[linestyle=solid,linecolor=gray,fillstyle=none](-4.5,0){0.35}
\rput(-2.4,1.45){\psscalebox{0.8}{$B_1$}}
\rput(-2.7,0.45){\psscalebox{0.8}{$B_2$}}
\rput(-2.4,-0.55){\psscalebox{0.8}{$B_3$}}
\rput(-4.4,0.45){\psscalebox{0.8}{$B_0$}}
\psbezier[linecolor=gray](-4.2,0.2)(-3.5,0.5)(-3.3,1)(-2.85,1)
\psbezier[linecolor=gray](-4.2,-0.2)(-3.5,-0.5)(-3.3,-1)(-2.85,-1)
\psbezier[linecolor=gray](-4.15,0)(-3.5,0.3)(-3.5,-0.3)(-3.15,0)
\rput(-3.5,0.9){\psscalebox{0.8}{$\gamma_1$}}
\rput(-3.5,0.15){\psscalebox{0.8}{$\gamma_2$}}
\rput(-3.5,-0.45){\psscalebox{0.8}{$\gamma_3$}}
\pscircle(4,0){2.0}
\psbezier(6,2)(5.5,2.3)(5.5,1)(4.5,1)
\psbezier(4.5,1)(5.5,1)(4.5,0.0)(4.2,0.0)
\psbezier(4.2,0.0)(5.5,0.0)(5.5,-1)(4.5,-1)
\psbezier(4.5,-1)(5.5,-1)(5.5,-2.3)(6,-2)
\rput(6,-1.8){\psscalebox{0.8}{$X$}}
\pscircle[fillcolor=black,fillstyle=solid](4.5,1){0.05}
\pscircle[fillcolor=black,fillstyle=solid](4.2,0){0.05}
\pscircle[fillcolor=black,fillstyle=solid](4.5,-1){0.05}
\pscircle[fillcolor=black,fillstyle=solid](2.5,0){0.05}
\psarc[linestyle=solid,linecolor=gray,fillstyle=none](4.5,1){0.35}{192}{168}
\psarc[linestyle=solid,linecolor=gray,fillstyle=none](4.2,0){0.35}{192}{168}
\psarc[linestyle=solid,linecolor=gray,fillstyle=none](4.5,-1){0.35}{192}{168}
\psarc[linestyle=solid,linecolor=gray,fillstyle=none](2.5,0){0.35}{55}{305}
\psbezier[linecolor=gray](2.7,0.3)(3.5,0.6)(3.7,1.1)(4.15,1.1)
\psbezier[linecolor=gray](2.8,0.1)(3.5,0.4)(3.7,0.9)(4.15,0.9)
\psbezier[linecolor=gray](2.8,-0.1)(3.5,-0.4)(3.7,-0.9)(4.15,-0.9)
\psbezier[linecolor=gray](2.7,-0.3)(3.5,-0.6)(3.7,-1.1)(4.15,-1.1)
\psbezier[linecolor=gray](2.75,0.1)(3.5,0.4)(3.5,-0.2)(3.85,0.1)
\psbezier[linecolor=gray](2.75,-0.1)(3.5,0.2)(3.5,-0.4)(3.85,-0.1)
\rput(3.5,1.0){\psscalebox{0.8}{$\oS$}}
\rput(2,1.0){\psscalebox{0.8}{$S$}}
\end{pspicture}}

\title[Semicontinuity of the mod 2 spectrum]{On the  semicontinuity of the mod 2  spectrum of hypersurface singularities}
\author{Maciej Borodzik}
\address{Institute of Mathematics, University of Warsaw, ul. Banacha 2,
02-097 Warsaw, Poland}
\email{mcboro@mimuw.edu.pl}
\author{Andr\'as N\'emethi}
\address{A. R\'enyi Institute of Mathematics, 1053 Budapest,  Re\'altanoda u. 13-15,  Hungary.}
\email{nemethi@renyi.hu}
\author{Andrew Ranicki}
\address{School of Mathematics, University of Edinburgh, Edinburgh EH9 3JZ, Scotland UK.}
\email{a.ranicki@ed.ac.uk}

\date{\today}
\makeatletter
\@namedef{subjclassname@2010}{\textup{2010} Mathematics Subject Classification}
\makeatother
\subjclass[2010]{primary: 57Q25 secondary: 57Q60, 14D07, 32S30}
\keywords{Codimension 2 embedding, Seifert surface,
Seifert matrix, Tristram-Levine signature, semicontinuity of the spectrum, variation structures,
Mixed Hodge Structure, link at infinity, higher dimensional knots}

\begin{document}
\begin{abstract}
We use purely topological methods to prove the semicontinuity of the mod 2 spectrum of local isolated hypersurface singularities in $\mathbb{C}^{n+1}$,
using  Seifert forms of high-dimensional non-spherical links,  the Levine--Tristram signatures and the generalized Murasugi--Kawauchi inequality.
\end{abstract}
\maketitle

\section{Introduction}

The present article extends the results of \cite{BN2}, valid for plane curves,  to arbitrary dimensions.
The main message is that the semicontinuity of the mod 2 Hodge spectrum (associated with local
isolated singularities, or with affine polynomials with some `tameness' condition) is topological in nature,
although its very definition and all known `traditional' proofs sit deeply in the analytic/algebraic theory.
Usually, the spectrum cannot be deduced from the topological data. In order to make clear these
 differences, let us review in short the involved invariants
 of a local isolated   hypersurface  singularity $f:(\CC^{n+1},0)\to(\CC,0)$.

The `homological package of the embedded topological type' of an isolated  hypersurface singularity
contains the information about the vanishing cohomology (cohomology of the Milnor fiber), the algebraic monodromy
acting on it, and different polarizations of it
including the intersection form and the (linking) Seifert form.
In fact, the Seifert form determines all this homological data. See e.g. \cite{AGV}.

On the other hand, the vanishing cohomology carries a mixed Hodge structure polarized by
the intersection form and the Seifert form, and it is  compatible with the monodromy action too.
It has several definitions, but all of them are analytic  \cite{A,Stee,St,Var,Var2}.
The equivariant  Hodge numbers were  codified by Steenbrink  in the spectral pairs; if one deletes the
information about the weight filtration one gets the spectrum/spectral numbers $Sp(f)$.
They are (in some normalization) rational numbers in the interval $(0,n+1)$.
Arnold  conjectured \cite{A}, and  Varchenko \cite{Var,Var2} and Steenbrink \cite{St} proved that
the spectrum behaves semicontinuously under deformations. In this way it becomes
a very strong tool e.g. in the treatment of the adjacency problem of singularities.

More precisely, in the presence of a deformation $f_t$,
where $t$ is the deformation parameter $t\in(\CC,0)$, the semicontinuity  guarantees that
 $|Sp(f_0)\cap I|\geqslant |Sp(f_{t\not=0})\cap I|$ for certain  semicontinuity domains $I$.
 Arnold  conjectured that $I=(-\infty,\alpha]$ is a semicontinuity domain for any $\alpha\in \RR$,
  Steenbrink and Varchenko proved the statement
 for $I=(\alpha,\alpha+1]$, which implies Arnold's conjecture. Additionally,  for some
 cases, Varchenko  verified   the stronger version, namely semicontinuity for $I=(\alpha,\alpha+1)$ \cite{Var}.

The relation between the Hodge invariants and the Seifert form was established  in \cite{Nem2},
proving that the collection of  mod 2 spectral pairs are equivalent with the real Seifert form.
Therefore, the real Seifert form determines  the mod 2 spectrum, that is,
the collection of numbers $\alpha$ mod 2 in $(0,2]$, where $\alpha$ run over $Sp(f)$.
Clearly, for plane curve singularities, i.e. when $n=1$, by taking mod 2 reduction we lose no information.

Surprisingly, this correspondence can be continued: in \cite{BN2} it is proved that if $n=1$ then
the {\it semicontinuity property of the mod 2 spectrum too is  topological}:
it  can be proved independently of Hodge theoretical tools, it follows from classical link theory.
Its precise statement
is the following:  length one `intervals' intersected by the mod 2 spectrum, namely sets of type
$Sp\cap (\alpha,\alpha+1)$ and $(Sp\cap (0,\alpha))\cup (Sp\cap (\alpha+1,2])$, for $\alpha\in [0,1]$,
satisfy semicontinuity properties, whenever  this question is well--posed (and under certain mild
extra assumptions regarding the roots of the monodromy operator).
The tools needed in this topological proof were the following: properties of the
{\it Tristram--Levine signature}, its connection with the spectrum, and the
{\it Murasugi--Kawauchi inequality} valid for it.

It was very natural to ask for a possible generalization of this fact for arbitrary dimensions.
This was seriously obstructed by two facts: the {\it existence of the Seifert form
up to an $S$--equivalence} and the analogue of the {\it Murasugi--Kawauchi inequality}
valid for any 2--codimension embedded manifold of $S^{2n+1}$.

This article has the following plan. First,  we collect and prove  all the other steps which are needed in the verification of  the 
semicontinuity, except these two main obstructions. Then, we handle the proofs of these two obstructions  at two different 
difficulty levels. 

The first level proof (the `strong version') is 
completely topological and uses  only the isotopy types of the corresponding higher dimensional links. 
It  is rather long and the needed  tools
are different from those needed to verify  the semicontinuity.
Therefore, we decided to separate them in another note  \cite{BNR}.

The second proof is adapted to the present application: it 
 assumes  the existence of  concrete Seifert  surfaces, which is hard to
prove in the general topological case,  but is automatically satisfied
in our present situation, since the links are cut out by holomorphic/polynomial equations. 
In this way the proof become considerable simpler and more transparent.
 This  `weak version' of  Murasugi--Kawauchi type
 inequality is given here in Subsection \ref{ss:2.1}. Some particularities of the links, e.g. the existence of the Milnor fiber, 
 which are rather standard in the theory of local singularities of affine polynomial maps, are used also several times.
 Therefore, from the point of view of the present application, the article can be consider complete and self--contained. 
 
\vspace{2mm}

The article is organized as follows.
In Section 2 we review the theory of higher dimensional links and their Seifert forms. Then we discuss the 
generalization of the Murasugi--Kawauchi inequality: we state the general version of it  from \cite{BNR}, and we give here a simpler proof using some additional assumptions (which are satisfied in the present case).  In the last subsection we
show how they can be applied for complex affine hypersurfaces.

Section 3 reviews facts about hermitian variation structures,
the tool which connects Seifert forms with the spectral numbers via the Levine--Tristram signatures.
Then we discuss how the machinery works in the case of a global affine hypersurface, provided that
the corresponding polynomial map satisfies some regularity conditions which guarantees
similar properties at infinity which are valid for local isolated singularities (the `Milnor package').
In this section we review some needed facts about these `tameness' conditions as well.

Section 4 contains the three semicontinuity results: (a)  the local case of
deformation of isolated singularities, (b) semicontinuity of the mod 2 spectrum of the
mixed Hodge structure at infinity of `nice' polynomials, and (c) an inequality which compares the
mod 2 spectrum at infinity of an affine fiber with the local spectrum of its singular points. 
We wish to emphasize that this type of semicontinuity (which produces bounds for the local spectra in terms of the
spectrum at infinity) is unknown in Hodge theory. 

Certain proofs and parts of the note show  similarities with \cite{BN2},
therefore some arguments are shortened, although we have tried to provide a presentation emphasizing
 all the basic steps of the proof.

\begin{notation}
For a finite set $A$, we denote by $|A|$ the cardinality of $A$, for a subset $X$ of $\CC^{n+1}$, $\intr X$ denotes its interior and $\ol{X}$ its
closure.
\end{notation}

\begin{acknowledgements}
The first author is supported by Polish MNiSzW Grant No N N201 397937 and also the
Foundation for Polish Science FNP.
The second author is partially supported by OTKA Grant K67928.
The authors express their thanks to the Research Fund of
the Edinburgh Mathematical Society for supporting the visit of MB and AN to Edinburgh in March 2012.
\end{acknowledgements}

\section{High dimensional links and their signatures}
\subsection{A quick trip through high dimensional link theory}\label{ss:2.1}
Here we present, in a condensed form, results about {\it high dimensional links},
that is, codimension 2 embeddings $M\subset S^{2n+1}$, where $M$ is a closed oriented $(2n-1)$--dimensional manifold, for any $n \geqslant 1$.
The theory resembles the `classical' theory of embeddings of disjoint
 unions of copies of $S^1$'s into $S^3$,
the special case $n=1$, although the proofs of the corresponding statements
are more involved. Unless specified otherwise, the results are from \cite{BNR}.
The first one, however, is older. It dates back to Erle \cite{Er}, see
also \cite{BNR}.

\begin{proposition}\label{prop:exofSei}
Let $M^{2n-1}\subset S^{2n+1}$ be a link. Then there exists a compact, oriented,
connected $2n$--dimensional manifold $\S\subset S^{2n+1}$, such that $\partial\S=M$.
\end{proposition}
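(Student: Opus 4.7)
My plan is to follow the classical Seifert construction, extended to high dimensions by Erle: first establish that $M$ has a trivial tubular neighborhood, then build a global map from the link complement to $S^1$ which restricts to the angular coordinate on the boundary of that tube, and finally take the preimage of a regular value. A short connectedness improvement via ambient handle addition finishes the argument.

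For the first step, let $\nu$ denote the normal bundle of $M$ in $S^{2n+1}$. Since both $M$ and $S^{2n+1}$ are oriented, $\nu$ is an oriented real rank $2$ bundle (equivalently, a complex line bundle), so its isomorphism class is determined by its Euler class $e(\nu)\in H^2(M;\ZZ)$. Because the restriction of the Thom class of a codimension $2$ oriented embedding to the zero section is the Euler class, one has $e(\nu)=i^*\tau_M$, where $i\colon M\hookrightarrow S^{2n+1}$ and $\tau_M\in H^2(S^{2n+1};\ZZ)$ is the Poincar\'e dual of $[M]$. Since $H^2(S^{2n+1};\ZZ)=0$ for $n\geqslant 1$, we conclude $e(\nu)=0$, so $\nu$ is trivial and a closed tubular neighborhood of $M$ is $M\times D^2$. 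Set $X:=S^{2n+1}\sm\intr(M\times D^2)$, so $\p X=M\times S^1$.

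Next, I look for a smooth map $\varphi\colon X\to S^1$ whose restriction to $\p X=M\times S^1$ is the projection to $S^1$. Homotopy classes of maps $X\to S^1=K(\ZZ,1)$ are classified by $H^1(X;\ZZ)$, and Alexander duality in $S^{2n+1}$ gives
\[
H^1(X;\ZZ)\cong H^1(S^{2n+1}\sm M;\ZZ)\cong \widetilde{H}_{2n-1}(M;\ZZ)\cong \ZZ^{\mu},
\]
where $\mu$ is the number of components of $M$ (each a closed oriented $(2n-1)$--manifold). The class corresponding to the sum of the meridional generators restricts on each boundary piece $M_i\times S^1$ to the $S^1$--projection; after adjusting by the homotopy extension property we obtain a map $\varphi$ agreeing with the projection on $\p X$. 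Making $\varphi$ transverse to a regular value $*\in S^1$, the preimage $\Sigma_0:=\varphi^{-1}(*)\subset X$ is a compact oriented $2n$--dimensional submanifold with $\p\Sigma_0=M\times\{*\}\cong M$.

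Finally, I make $\Sigma_0$ connected by iterated ambient $0$--surgery. Given two components $\Sigma'$ and $\Sigma''$, pick a smoothly embedded arc $\gamma\subset S^{2n+1}\sm M$ joining $p\in\Sigma'$ to $q\in\Sigma''$ which meets $\Sigma_0$ transversally only at its endpoints. The arc has trivial normal bundle in $S^{2n+1}$, so a closed tubular neighborhood $\gamma\times D^{2n}$ intersects $\Sigma_0$ in two transverse $2n$--disks; excising their interiors and inserting the cylinder $\gamma\times \p D^{2n}$ yields an oriented hypersurface with the same boundary and one fewer component. Iterating produces the desired connected Seifert hypersurface $\Sigma$. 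The main obstacle lies in the second step: showing that the angular coordinate on $\p X$ extends globally to $X$. All other steps are essentially automatic once the normal bundle triviality and the Alexander duality computation are in place.
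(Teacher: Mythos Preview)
The paper does not actually prove this proposition; it only attributes it to Erle \cite{Er} and \cite{BNR} and moves on. Your argument is precisely the classical Pontryagin--Thom/Erle construction that those references carry out, so there is nothing to compare against in the paper itself.

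Your outline is correct, with two small points worth tightening. First, the cohomology class you select in $H^1(X;\ZZ)$ certainly evaluates to $1$ on every meridian, but its restriction to $\p X=M\times S^1$ may differ from the projection class by an element of $H^1(M;\ZZ)$; this is absorbed either by changing the trivialization of $\nu$ (trivializations form an $H^1(M;\ZZ)$--torsor) or by observing that the preimage of a regular value in $M\times S^1$ is in any case a section over $M$, hence a copy of $M$. Second, in the ambient $0$--surgery step the tube $\gamma\times S^{2n-1}$ glues in an orientation--preserving way only if the arc leaves one component on the positive side of the coorientation induced by $\varphi$ and arrives at the other on the negative side; one must choose $\gamma$ with this property (which is always possible, e.g.\ after cutting $X$ along $\Sigma_0$ and using connectivity, or by first minimizing the number of components of $\varphi^{-1}(*)$ within the homotopy class). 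With these standard adjustments your proof is complete and matches the argument the paper cites.
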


We shall call such $\S$ a {\it Seifert surface}  for $M$; in general, there is not a
canonical choice of $\Sigma$, although there is a canonical choice if $M$ is the link of a
singularity. Given a Seifert surface $\S$, let $FH_n$ be the torsion free quotient of $H_n(\S,\Z)$.
Let $\alpha_1,\dots,\alpha_m$
be the basis of $FH_n$ and let us represent it by cycles denoted also by $\alpha_1,\dots,\alpha_m$.
We can define a matrix $S=\{s_{ij}\}_{i,j=1}^m$ with $s_{ij}=\lk(\alpha_i,\alpha_j^+)$, where $\alpha_j^+$ is a cycle $\alpha_j$ pushed slightly off $\S$
in the positive direction. $S$ is called the Seifert matrix of $M$ relative to $\Sigma$.

\begin{proposition}[See \expandafter{\cite[Main Theorem 2]{BNR}}]\label{prop:Sequiv}
Any two Seifert matrices of a given link $M$, corresponding to possibly different Seifert surfaces, are $S$-equivalent.
\end{proposition}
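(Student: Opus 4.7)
My plan is to follow the classical Levine argument for $n=1$, suitably extended to higher dimensions: connect $\Sigma_1$ to $\Sigma_2$ by a finite sequence of elementary geometric moves, each of which alters the associated Seifert matrix by one S-equivalence step.

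First I would construct a cobordism between the two surfaces. Both $\Sigma_i$ are Poincar\'e dual, under the linking pairing, to a canonical generator of $H^1(S^{2n+1}\sm M;\Z)\cong \Z$, so they represent the same relative homology class in $H_{2n}(S^{2n+1}, M;\Z)$. After perturbing to general position while keeping the common boundary $M$ fixed, there exists an embedded, compact, oriented $(2n+1)$-manifold $W\subset S^{2n+1}\times[0,1]$ with $\p W = (\Sigma_1\times\{0\})\cup (M\times [0,1])\cup (-\Sigma_2\times\{1\})$.

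Next I would choose a generic Morse function $h\colon W\to [0,1]$ that is constant on $M\times[0,1]$ and rearrange its critical points by index. For $n\geq 2$, the Smale handle-cancellation theorem lets me remove all critical points of index $k\notin\{n,n+1\}$; the case $n=1$ is the classical Seifert--Levine theorem and can be invoked as a base case. The surviving handles come in dual pairs of indices $n$ and $n+1$, and each such attachment realizes an ambient embedded surgery on the intermediate Seifert surface $\Sigma_t$. A direct homological computation --- tracking the framed attaching sphere and its dual on $FH_n$ and on the self-linking pairing --- shows that this surgery alters the Seifert matrix by precisely one stabilization move of S-equivalence, inserting a hyperbolic block of the form $\left(\begin{smallmatrix}0&1\\0&0\end{smallmatrix}\right)$ together with an adjoining row and column. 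Composing the moves along $h$ then yields the desired S-equivalence between the two Seifert matrices.

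The main obstacle is the middle-dimensional analysis. The attaching spheres of the $n$- and $(n+1)$-handles live not only in the abstract Seifert surface but are embedded in all of $S^{2n+1}$, with framings prescribed by the ambient embedding of $W$, and I would need to verify that these ambient framings are compatible with the algebraic stabilization move rather than producing a twisted variant outside the S-equivalence class. Dimension counts supply the required general position and Whitney-trick room as soon as $n\geq 2$, and the normal bundle of $M$ in $S^{2n+1}$ is trivial because $M$ bounds a Seifert surface, which forces the relevant framings to be standard. This careful bookkeeping with framings is what converts an abstract cobordism argument into the sharper statement about Seifert matrices of a fixed link $M$.
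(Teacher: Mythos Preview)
The paper does not give a proof of this proposition; it is quoted from \cite{BNR}, whose methods (as the title and the discussion around Assumption~\ref{ass:ass} indicate) are those of algebraic surgery rather than the hands--on Levine argument you outline. So there is no ``paper's own proof'' to compare with here, and your proposal must be judged on its own merits.

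Your outline is the correct classical strategy for \emph{spherical} knots $S^{2n-1}\subset S^{2n+1}$, and the construction of an embedded cobordism $W\subset S^{2n+1}\times[0,1]$ via a homotopy of maps to $S^1$ is fine. The substantive gap is the sentence ``the Smale handle--cancellation theorem lets me remove all critical points of index $k\notin\{n,n+1\}$.'' Smale's cancellation requires the relevant relative homotopy (or at least homology) groups of the cobordism to vanish; a cobordism between two arbitrary Seifert surfaces of a non--spherical link has no reason to satisfy this. For instance one may connect--sum the product cobordism, inside a small ball of $S^{2n+1}\times[0,1]$, with $S^k\times S^{2n+1-k}$ for any $1\leqslant k\leqslant n-1$, producing handles of index $k$ and $2n+1-k$ that do not algebraically cancel against anything. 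This step, as written, fails.

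There is a repair for this particular point: for $2\leqslant k\leqslant n-1$ (and dually $n+2\leqslant k\leqslant 2n-1$) a direct Mayer--Vietoris computation shows that an ambient index--$k$ surgery leaves $FH_n(\Sigma)$ and the linking pairing on it literally unchanged, so those handles need not be cancelled at all; index $0,1,2n,2n+1$ are handled by elementary connectivity arguments. But you then face the real difficulty, which the paper flags explicitly (``standard for knots\ldots less familiar for links''): for a general $M$ the attaching sphere $S^{n-1}$ of an index--$n$ handle need not be null--homologous in $\Sigma$, so the surgery may change the rank of $FH_n$ by one rather than two, and the resulting modification of the Seifert matrix is not one of the standard $S$--equivalence enlargements. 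Your final paragraph about framings does not address this; the triviality of the normal bundle of $M$ says nothing about the homology class of the attaching sphere inside $\Sigma$. Sorting out exactly which algebraic moves these non--standard surgeries induce, and showing they stay within the $S$--equivalence class, is precisely the content that \cite{BNR} supplies and that is missing from your sketch.
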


We refer to \cite[Chapter 5.2]{Kaw-book} for the definition $S$-equivalence of Seifert matrices.
The above result is standard for knots $S^{2n-1} \subset S^{2n+1}$ (when $b=S+(-1)^nS^T$ is
invertible), but is less familiar for links $M^{2n-1} \subset S^{2n+1}$.

Let us fix a link $M$ and let $\S$ be one  of its  Seifert surfaces, with Seifert matrix $S$. Following
an algebraic result of Keef \cite{Keef} (see also \cite[Section 3]{BN}), over the field of rational numbers
changing $S$ by a matrix $S$-equivalent to it we can write
\begin{equation}\label{eq:keef}
S=S_{ndeg}\oplus S_0,
\end{equation}
where $\det S_{ndeg}\neq 0$ and all the entries of $S_0$ are zero. Let us define
\begin{equation}\label{eq:n0}
n_0:=\textrm{size of }S_0=\dim(\ker S\cap \ker S^T).
\end{equation}
Moreover, the right hand side of (\ref{eq:keef})
does not depend on a particular choice of the Seifert matrix in its $S$-equivalence class,
more precisely, the integer $n_0$ and the   rational congruence class of the
matrix $S_{ndeg}$ are well defined.

\begin{definition}\label{def:mls}
Let $M^{2n-1} \subset S^{2n+1}$ be a link with a Seifert surface $\Sigma^{2n} \subset S^{2n+1}$,
$S$ the Seifert matrix and $S^T$ the transposed matrix.
The \emph{Alexander polynomial} of $M$ is defined by
\begin{align*}
\Delta_M(t)&:=\det(S_{ndeg}\cdot t + (-1)^nS_{ndeg}^T).
\end{align*}
For any  $\xi\in S^1\setminus \{1\}$ define
the  {\it Levine--Tristram signature} and {\it  nullity } of $M$ at $\xi$
\begin{align*}
\s_M(\xi)&:=\text{signature}\, [ \, (1-\xi)S+(-1)^{n+1}(1-\ol{\xi})S^T\, ]\\
n_M(\xi)&:=\text{nullity}\, [ \, (1-\xi)S+(-1)^{n+1}(1-\ol{\xi})S^T\, ]~.
\end{align*}
\end{definition}
\begin{remark}
Strictly speaking, $\Delta_M(t)$ is the classical Alexander polynomial only if $n_0=0$, i.e. if $S=S_{ndeg}$.
Otherwise, if $n_0>0$, then $\Delta_M(t)$ is the $(n_0+1)$--st classical Alexander polynomial.
In this note we shall not insist on this distinction.
\end{remark}
The following result is completely analogous to its one-dimensional version.
\begin{lemma}\label{lem:sigmaalex}
If $\xi\in S^1\setminus\{1\}$ is not a root of the Alexander polynomial, then $n_M(\xi)=n_0$.
Otherwise $n_M(\xi)>n_0$. For $\xi,\eta\in S^1\setminus\{1\}$ if  there exists an arc in
$S^1\setminus\{1\}$ connecting them and not containing any root
of the Alexander polynomial, then $\s_M(\xi)=\s_M(\eta)$.
\end{lemma}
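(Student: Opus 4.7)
The strategy is to reduce everything to the nondegenerate piece $S_{ndeg}$ via the Keef decomposition (\ref{eq:keef}), and then translate the Hermitian matrix in Definition \ref{def:mls} into a multiple of the Alexander matrix evaluated at $\xi$.

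First I would invoke the fact (already used implicitly when $n_0$ and $S_{ndeg}$ were said to be invariants) that the Levine--Tristram signature and nullity are invariants of the rational $S$-equivalence class of $S$: a rational congruence $S \mapsto PSP^T$ induces a $*$-congruence of the Hermitian matrix $H_S(\xi):=(1-\xi)S+(-1)^{n+1}(1-\bar\xi)S^T$, which preserves signature and nullity; the elementary enlargement moves in $S$-equivalence add hyperbolic summands that contribute $0$ to the signature and $0$ to the nullity of $H_S(\xi)$ for $\xi\neq 1$. Hence we may assume $S = S_{ndeg}\oplus S_0$, in which case
\[
H_S(\xi)\;=\;H_{S_{ndeg}}(\xi)\,\oplus\, 0_{n_0\times n_0}.
\]
Consequently $\sigma_M(\xi)=\sign H_{S_{ndeg}}(\xi)$ and $n_M(\xi)=n_0+\nullt H_{S_{ndeg}}(\xi)$, and the claims reduce to statements about the $m\times m$ matrix $H_{S_{ndeg}}(\xi)$, where $m$ is the size of $S_{ndeg}$.

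Next I would compute $\det H_{S_{ndeg}}(\xi)$. Since $|\xi|=1$ and $\xi\neq 1$, a direct computation gives $\frac{1-\bar\xi}{1-\xi}=-\bar\xi$, so
\[
H_{S_{ndeg}}(\xi)\;=\;(1-\xi)\bigl[\,S_{ndeg}+(-1)^{n}\bar\xi\, S_{ndeg}^T\,\bigr].
\]
Pulling out $\bar\xi$ from each row of the bracket yields
\[
\det H_{S_{ndeg}}(\xi)\;=\;(1-\xi)^m\,\bar\xi^{\,m}\,\det\bigl(\xi\, S_{ndeg}+(-1)^n S_{ndeg}^T\bigr)\;=\;(1-\xi)^m\bar\xi^{\,m}\,\Delta_M(\xi).
\]
Since $(1-\xi)^m\bar\xi^m\neq 0$ for $\xi\in S^1\setminus\{1\}$, the matrix $H_{S_{ndeg}}(\xi)$ is nondegenerate if and only if $\Delta_M(\xi)\neq 0$. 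This immediately gives the two nullity assertions: $n_M(\xi)=n_0$ when $\Delta_M(\xi)\neq 0$, and $n_M(\xi)>n_0$ when $\Delta_M(\xi)=0$.

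Finally, for the signature statement, fix an arc $\gamma\subset S^1\setminus\{1\}$ joining $\xi$ to $\eta$ and missing the roots of $\Delta_M$. Along $\gamma$ the map $\zeta\mapsto H_{S_{ndeg}}(\zeta)$ is a continuous path of Hermitian matrices that is nondegenerate at every point, by the determinant formula above. The eigenvalues of a continuous path of Hermitian matrices depend continuously on the parameter, and none of them can cross $0$ along $\gamma$; therefore the number of positive and the number of negative eigenvalues of $H_{S_{ndeg}}(\zeta)$ are constant along $\gamma$. This yields $\sigma_M(\xi)=\sigma_M(\eta)$. The main subtlety in carrying out this plan is really bookkeeping rather than conceptual: making sure that the reduction to $S_{ndeg}\oplus S_0$ is legitimate at the level of signatures/nullities, which is why I would justify the $S$-equivalence invariance of $H_S(\xi)$ up front.
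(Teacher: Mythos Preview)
Your proof is correct and follows essentially the same route as the paper: reduce to the block $S_{ndeg}$ via the Keef decomposition, factor the Hermitian matrix as a nonzero scalar times $\xi S_{ndeg}+(-1)^n S_{ndeg}^T$ (the paper writes this as $(\bar\xi-1)(\xi S+(-1)^nS^T)$, which is your $(1-\xi)\bar\xi$ factorization), and then use continuity of eigenvalues along the arc. The only difference is that you spell out the $S$-equivalence invariance and the determinant computation in more detail than the paper does.
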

For the completeness of the argument we present the straightforward proof.
\begin{proof}
Let $S=S_{ndeg}\oplus S_0$. Then $\s_M(\xi)=\sign[(1-\xi)S_{ndeg}+(-1)^{n+1}(1-\ol{\xi})S_{ndeg}^T]$
and $n_M(\xi)=\nullt[(1-\xi)S_{ndeg}+(-1)^{n+1}(1-\ol{\xi})S_{ndeg}^T]+n_0$. Hence we can assume $S=S_{ndeg}$.
But then,
\[(1-\xi)S+(-1)^{n+1}(1-\ol{\xi})S^T=(\ol{\xi}-1)(\xi S+(-1)^nS^T).\]
Hence $\Delta_M(\xi)\neq 0$ if and only if $(1-\xi)S+(-1)^{n+1}(1-\ol{\xi})S^T$ is non-degenerate.
Furthermore, if the arc $\gamma$ joins $\xi$ and $\eta$,
 then the matrix $[(1-\alpha)S+(-1)^{n+1}(1-\ol{\alpha})S^T]_{\alpha\in\gamma}$ is a path
in the space of non-degenerate sesquilinear forms
along which the  signature is constant.
\end{proof}

Our main tool  is the following generalization of the classical {\it Murasugi--Kawauchi inequality}.

\begin{theorem}[see \expandafter{\cite[Main Theorem 3]{BNR}}]\label{thm:murineq}
Let $(Y;M_0,M_1) \subset S^{2n+1}\times([0,1];\{0\},\{1\})$ be a cobordism of links $M_0,M_1 \subset S^{2n+1}$.
For any Seifert surfaces $\S_0,\S_1 \subset S^{2n+1}$ for $M_0,M_1$ and $\xi\in S^1\setminus\{1\}$ we have
\begin{equation}\label{eq:murineq}
|\sigma_{M_0}(\xi)-\sigma_{M_1}(\xi)|\leqslant b_n(\Sigma_0\cup_{M_0}Y\cup_{M_1}\Sigma_1)-b_n(\S_0)-b_n(\S_1)
+n_{M_0}(\xi)+n_{M_1}(\xi),
\end{equation}
where $b_n(\cdot)$ denotes the $n$-th Betti number.
\end{theorem}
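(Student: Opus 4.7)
My approach mimics the classical proof of the Murasugi--Kawauchi inequality, upgraded to arbitrary dimension. The strategy is to \emph{glue the two Seifert surfaces together via the cobordism}, forming a closed oriented $(2n)$-manifold
\[
\Sigma:=\Sigma_0\cup_{M_0}Y\cup_{M_1}\Sigma_1\subset W:=S^{2n+1}\times[0,1]
\]
(with a suitable choice of orientations; one of $\Sigma_0,\Sigma_1$ enters with reversed orientation relative to its role as a Seifert surface), and to introduce a single Hermitian form on $H_n(\Sigma;\CC)$ that encodes both Tristram--Levine signatures simultaneously. Since $H_{2n}(W)=0$ for $n\geqslant 1$, the class $[\Sigma]$ is null and $\Sigma$ bounds a compact oriented $(2n+1)$-submanifold $E\subset W$. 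After choosing a normal push-off $\Sigma^+$ of $\Sigma$ into the complement of $E$, I would define a Seifert-like pairing $\mathcal{S}(\alpha,\beta):=\lk_W(\alpha,\beta^+)$ on $H_n(\Sigma;\ZZ)$, which is well defined because $H_n(W)=0$.

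The proof then reduces to three properties of the Hermitian form
\[
T_\xi:=(1-\xi)\mathcal{S}+(-1)^{n+1}(1-\bar\xi)\mathcal{S}^T
\]
on $V:=H_n(\Sigma;\CC)$. First, on each image $W_i:=\mathrm{im}(H_n(\Sigma_i;\CC)\to V)$, the pairing $\mathcal{S}$ reproduces the classical Seifert form of $M_i$ (up to the orientation sign above), because a push-off inside the slice $S^{2n+1}\times\{i\}$ is already a valid push-off in $W$, and linking numbers in $S^{2n+1}$ can be computed by bounding chains that stay in that slice. Second, $W_0$ and $W_1$ are mutually $\mathcal{S}$-orthogonal: a bounding chain for the push-off of a cycle on $\Sigma_1$ can be kept near the top slice $S^{2n+1}\times\{1\}$, disjoint from any cycle on $\Sigma_0$ at the bottom. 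Third, $T_\xi$ is metabolic: the Lefschetz-duality kernel $K:=\ker\bigl(H_n(\Sigma;\CC)\to H_n(E;\CC)\bigr)$ is half-dimensional in $V$ and $\mathcal{S}$-isotropic, because with the push-off chosen to avoid $E$, the number $\lk_W(\alpha,\beta^+)$ for $\alpha=\partial C$, $C\subset E$, equals the intersection $C\cdot\beta^+=0$. In particular, $\sign(T_\xi)=0$.

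Given these three facts, the conclusion follows from linear algebra. Using the elementary estimate that restricting a Hermitian form to a codimension-$c$ subspace shifts its signature by at most $c$, the vanishing of $\sign(T_\xi)$ yields
\[
|\sign(T_\xi|_{W_0+W_1})|\leqslant \dim V-\dim(W_0+W_1).
\]
The $\mathcal{S}$-orthogonality of $W_0$ and $W_1$, together with the observation that $W_0\cap W_1$ lies in the radical of $T_\xi|_{W_0+W_1}$, identifies the left-hand side with $|\sigma_{M_0}(\xi)-\sigma_{M_1}(\xi)|$, the minus sign reflecting the orientation reversal. Routine Mayer--Vietoris dimension bookkeeping for $\Sigma=(\Sigma_0\cup Y)\cup\Sigma_1$, combined with $\nullt(T_\xi|_{H_n(\Sigma_i;\CC)})=n_{M_i}(\xi)$ from Lemma \ref{lem:sigmaalex}, then bounds the right-hand side by $b_n(\Sigma)-b_n(\Sigma_0)-b_n(\Sigma_1)+n_{M_0}(\xi)+n_{M_1}(\xi)$, giving \eqref{eq:murineq}.

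\textbf{Main obstacle.} The hardest ingredient is the third property: producing an explicit half-dimensional metabolizer for $T_\xi$ via Lefschetz duality for the bounding region $E$, and verifying $\mathcal{S}$-isotropy through the push-off chosen into the complement of $E$. Equally delicate is the final bookkeeping, in which the kernels $\ker(H_n(\Sigma_i;\CC)\to V)$, the intersection $W_0\cap W_1$, and the codimension of $W_0+W_1$ in $V$ must all combine so that the correction exactly matches $n_{M_0}(\xi)+n_{M_1}(\xi)$; this requires careful use of the Mayer--Vietoris and long exact sequences of the decomposition. Finally, the orientation conventions for the gluing of $\Sigma_0$, $Y$, and $\Sigma_1$ must be tracked consistently throughout, so that the signature computation outputs the \emph{difference} $\sigma_{M_0}(\xi)-\sigma_{M_1}(\xi)$ rather than their sum.
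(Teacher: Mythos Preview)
Your plan and the paper share the same core mechanism: exhibit a $(2n{+}1)$--manifold bounding $\Sigma_0\cup Y\cup\Sigma_1$, and use the kernel of the inclusion into that bounding manifold as an isotropic subspace for the relevant linking/Seifert pairing. However, the implementations differ in two essential ways.

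First, the paper does \emph{not} prove the theorem in full generality here; it works under Assumption~\ref{ass:ass}, which hands you the bounding manifold $\Omega=(\arg f)^{-1}(\delta)$ explicitly. Your plan instead asserts that ``since $H_{2n}(W)=0$, $\Sigma$ bounds an embedded $E$''. In codimension~2 this is precisely a Seifert--surface existence statement (cf.\ Proposition~\ref{prop:exofSei}) and is not a consequence of the homology vanishing alone; moreover, you then need $E$ to meet the slices $S^{2n+1}\times\{i\}$ transversally along $\Sigma_i$ so that the push--off normal to $E$ coincides with the push--off normal to $\Sigma_i$ inside the slice. Without this, the identification ``$\mathcal S$ restricted to $W_i$ reproduces $S_i$'' is not justified. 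With the explicit $\Omega$ the transversality is automatic, which is exactly why the paper imposes Assumption~\ref{ass:ass}.

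Second, and more structurally, the paper never introduces a form on $H_n(\Sigma)$. It works directly with the block form $S=(-S_0)\oplus S_1$ on $H_n(\Sigma_0)\oplus H_n(\Sigma_1)$, whose Levine--Tristram signature is tautologically $\sigma_{M_1}(\xi)-\sigma_{M_0}(\xi)$, and shows that $S$ vanishes on $\ker\bigl(H_n(\Sigma_0\sqcup\Sigma_1)\to H_n(\Omega)\bigr)$. A single half--dimensionality estimate for $\ker j_*$ via Poincar\'e duality on $\partial\Omega$ then finishes the proof. This bypasses entirely your detour through $V=H_n(\Sigma)$: there is no need to track the images $W_i$, the kernels $\ker(\iota_{i*})$, the intersection $W_0\cap W_1$, or to argue that these corrections recombine into $n_{M_0}(\xi)+n_{M_1}(\xi)$ --- bookkeeping you yourself flag as the main obstacle. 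In short, your route is closer in spirit to the general argument of \cite{BNR}, while the paper's proof is a deliberately streamlined special case that trades generality for the elimination of that bookkeeping.
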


\begin{remark}
In the classical case $n=1$, the inequality looks slightly different (compare \cite[Theorem~12.3.1]{Kaw-book}), namely one has 
\begin{equation}\label{eq:murineq2}
|\sigma_{M_0}(\xi)-\sigma_{M_1}(\xi)|+
|n_{M_0}(\xi)-n_{M_1}(\xi)| \leqslant b_1(Y,M_0)=b_1(Y,M_1)~.
\end{equation}
In this case one has a better interplay between the topology and nullities (see for example the quantities $w_L$ and $u_L$ in \cite[Section 5]{Bo}).
We do not know whether this stronger inequality
holds in higher dimensions; the approach of \cite{BNR} seems to be insufficient to prove that. 
\end{remark}

For the convenience of the reader, and also for the completeness of the proof of the semicontinuity property,
we present  below a sketch of proof of Theorem~\ref{thm:murineq} under the following additional
assumption.

\begin{ass}\label{ass:ass}
There exists a smooth function $f\colon S^{2n+1}\times [0,1]\to\C$ such that:
\begin{itemize}
\item[$\bullet$] $Y=f^{-1}(0)$, in particular $M_0=S^{2n+1}\times\{0\}\cap f^{-1}(0)$ and $M_1=S^{2n+1}\times\{1\}\cap f^{-1}(0)$;
\item[$\bullet$] the map $\arg f\colon S^{2n+1}\times[0,1]\sm Y\to S^1$ given by $x\mapsto \frac{f(x)}{|f(x)|}$ is surjective;
\item[$\bullet$] there is a non-critical value $\delta\in S^1$ of $\arg f$ and of the restriction of $\arg f$ to $S^{2n+1}\times\{0,1\}\sm(M_0\sqcup M_1)$,
such that if we define $\Omega=(\arg f)^{-1}(\delta)$ then $\S_i=\Omega\cap S^{2n+1}\times\{i\}$ for $i=0,1$.
\end{itemize}
\end{ass}

If we study deformations of isolated singularities of hypersurfaces, the existence of function $f$ will always be clear.
The function will be then even complex analytic,
although we only need smoothness to ensure that the Sard's lemma works.
In fact, many arguments
of \cite{BNR} rely on constructing a function like the one above using homological methods.
The last condition of the assumptions is
slightly weaker than the condition that $\arg f$ gives a fibration of $S^{2n+1}\times\{i\}\sm M_i\to S^1$ with fiber isotopic
to $\S_i$ for $i=0,1$.

Assumption~\ref{ass:ass} is satisfied in all the cases that we consider in the paper (see Remark~\ref{rem:ass} or 
Proposition~\ref{prop:newSeifert} below).
That enables us to prove all the results from Section~\ref{s:4} without referring to surgery theory from \cite{BNR}. 

However, in this
approach, the signatures and their properties depend \emph{a priori} on the function $f$ and on the specific
choice of the Seifert surface.
It is not clear whether these properties are of topological nature, in particular, whether the results from Section~\ref{s:4}
are purely topological, or not.
The theorems from \cite{BNR} clarify this. The Levine--Tristram signatures
are defined even if the link in question is not fibered and depend only on the isotopy of the link.
Moreover, the behaviour of the Levine--Tristram signatures under cobordism depends only on homological properties
of the manifold which realizes the cobordism between links.

\begin{proof}[Proof of Theorem~\ref{thm:murineq}]
We denote $C_0:=S^{2n+1}\times\{0\}$ and $C_1:=S^{2n+1}\times\{1\}$.  On $(-C_0)\sqcup C_1$ (the minus
sign denotes that we reverse the orientation) we can
consider the linking form: if $\alpha,\beta$ are two {\it disjoint} $n$-dimensional cycles on $C_0\sqcup C_1$ we define
\[\lk(\alpha,\beta)=-\lk(\alpha_{0},\beta_0)+\lk(\alpha_1,\beta_1),\]
where $\alpha_i=\alpha\cap C_i$, $\beta_i=\beta\cap C_i$, $i=0,1$. This definition allows us to define the Seifert pairing for $M_0\sqcup M_1$
with respect to the Seifert surface $-\S_0\sqcup\S_1$ by $S(\alpha,\beta)=\lk(\alpha,\beta^+)$, where $\beta^+$
is the cycle $\beta$ pushed off slightly from $\S_0\sqcup\S_1$ in a positive normal direction. If $S_0$ and $S_1$ denote
the Seifert pairings for $M_0$ and $M_1$, then clearly
$S=(-S_0)\oplus S_1$.

Observe now that $\p\Omega=\S_0\cup Y\cup\S_1$. Let $k\colon\S_0\sqcup\S_1\to\p\Omega$ be the inclusion.
We denote also $j\colon\p\Omega\to\Omega$ and $i=k\circ j$.

\smallskip

{\bf Claim.} If $\alpha,\beta$ are disjoint and are elements of
$\ker i_*\colon H_n(\S_0\sqcup \S_1;\Q)\to H_n(\Omega;\Q)$, then $S(\alpha,\beta)=0$.

\smallskip
To prove the claim, we assume that $\alpha,\beta\in\ker i_*$. Then, there exist $(n+1)$-dimensional cycles $A,B\subset \Omega$, such
that $\p A=\alpha$, $\p B=\beta$. Let $B^+$ be the cycle in $S^{2n+1}\times[0,1]$
obtained by pushing $B$ off $\Omega$ in a positive normal direction.
Clearly $\p B^+=\beta^+$. But then
\[\lk(\alpha,\beta^+)=A\cdot B^+=0,\]
because $A$ and $B^+$ are disjoint. This proves the claim.

\smallskip
Let now $k_*$ and $j_*$ denote the induced maps on $n$-th homology with rational coefficients.
By a standard Poincar\'e duality argument $\dim\ker j_*=\frac12b_n(\p\Omega)$. Therefore
\[\dim\ker i_*\geqslant \dim\ker k_*+(\dim\ker j_*-\dim\coker k_*)=\frac12 b_n(\p\Omega)-(b_n(\p\Omega)-b_n(\S_0\sqcup \S_1)).\]
Consider now $\xi\in S^1\sm\{1\}$ and assume for simplicity that $n_{M_0}(\xi)=n_{M_1}(\xi)=0$ (the proof in
general case is only slightly more complicated). This means that the form $(1-\xi)S+(1-\ol{\xi})S^T$
is non-degenerate. By the claim, it vanishes on a subspace of dimension $\dim\ker i_*$, therefore the absolute value of its signature is bounded by
\[b_n(\S_0\sqcup \S_1)-2\dim\ker i_*=b_n(\p\Omega)-b_n(\S_0\sqcup \S_1).\]
\end{proof}

\subsection{Application to affine or local complex hypersurfaces}
Let $X\subset \mathbb{C}^{n+1}$ be a complex hypersurface with at most
isolated singularities.
If $x\in X$ is a singular point of $X$, consider a sufficiently small sphere
$S_x\simeq S^{2n+1}$ centered at $x$. The intersection $X\cap S_x$ embedded in $S_x$ is called
the \emph{link} of the hypersurface singularity $(X,x)\subset ( \mathbb{C}^{n+1},x)$.
We shall denote it by $\LS_x\subset S_x$.

\smallskip
Let now $B$ be any ball in $\mathbb{C}^{n+1}$ such that $S=\partial B$ is transverse to $X$ (in particular $S$ omits the
singular set of $X$).
Let $M:=S\cap X\subset S$. It is a closed codimension 2 submanifold of $S$, i.e. a high dimensional link.
Let $x_1,\dots,x_k$ be those singular points of $X$, that lie inside $B$.
We have the following result.
\begin{proposition}\label{prop:bol}
Let $X^s$ be the smoothing of $X$ inside $B$ (whose boundary will be identified via an isotopy with
$M$ too) and $ \Sigma$ a Seifert surface of $M\subset S$.
Moreover, for $j=1,\dots,k$, let $ \Sigma_j$ be the Milnor fiber at $x_j$.
 Then for all $\xi\in S^1\setminus\{1\}$
\[\Big|\sigma_{M}(\xi)-\sum_{j}\sigma_{\LS_{x_j}}(\xi)\Big|\leqslant
b_n(X^s\cup_M \Sigma)-b_n( \Sigma)-\sum_{j}b_n( \Sigma_j)+
n_M(\xi)+\sum_{j} n_{\LS_{x_j}}(\xi).\]
\end{proposition}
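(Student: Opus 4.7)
The plan is to reduce the inequality to a single application of Theorem~\ref{thm:murineq}, with $M_0=M$ and $M_1$ equal to the disjoint union of the local links $\LS_{x_j}$. To apply that theorem one must (i) realise $\bigsqcup_j\LS_{x_j}$ as a link in a single sphere $S^{2n+1}$, (ii) produce an ambient cobordism of the product form $S^{2n+1}\times[0,1]$, and (iii) identify the prescribed Seifert surfaces and the glued-up surface $\Sigma_0\cup_{M_0}Y\cup_{M_1}\Sigma_1$ with the geometric pieces appearing on the right-hand side.

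For (i) and (ii), I would choose small mutually disjoint closed balls $B_{x_j}\subset\intr B$ around the singular points so that $S_{x_j}:=\partial B_{x_j}$ is transverse to $X$ and $S_{x_j}\cap X=\LS_{x_j}$, and then, inside the open set $B\sm X$, pick $k-1$ disjoint embedded arcs joining the $B_{x_j}$ into a tree. Thickening these arcs to tubular neighbourhoods and taking the union with the small balls gives a topological ball $K\subset B$, whose boundary $\oS:=\partial K$ is a sphere diffeomorphic to $S^{2n+1}$; because the arcs were chosen off $X$, we still have $\oS\cap X=\bigsqcup_j\LS_{x_j}=:M_1$. The closed complement $N:=\overline{B\sm K}$ is the complement of a ball inside a ball, hence diffeomorphic to $S^{2n+1}\times[0,1]$ with inner sphere $\oS$ and outer sphere $S$, and $Y:=X\cap N$ is a smooth cobordism from $M$ to $M_1$ since $N$ contains no singular points of $X$.

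For (iii), I would take $\Sigma_0:=\Sigma$, and take $\Sigma_1$ to be the disjoint union of the Milnor fibres $\Sigma_j$ realised on the spheres $S_{x_j}$ via the local Milnor fibrations $S_{x_j}\sm\LS_{x_j}\to S^1$; after choosing the tube attaching disks to avoid these fibres, $\Sigma_1$ becomes a bona fide Seifert surface of $M_1\subset\oS$. Additivity of signature, nullity and $n$-th Betti number over connected components gives $\sigma_{M_1}(\xi)=\sum_j\sigma_{\LS_{x_j}}(\xi)$, $n_{M_1}(\xi)=\sum_j n_{\LS_{x_j}}(\xi)$ and $b_n(\Sigma_1)=\sum_j b_n(\Sigma_j)$, while the geometric identity
\[\Sigma_0\cup_{M_0}Y\cup_{M_1}\Sigma_1 \;=\; \Sigma\cup_M X^s\]
simply expresses that gluing the Milnor fibres to $X\cap N$ along the local links replaces each singularity by its smoothing. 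Substituting all of this into the conclusion of Theorem~\ref{thm:murineq} delivers exactly the inequality claimed.

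The one point that requires care, and that I expect to be the main obstacle, is verifying Assumption~\ref{ass:ass} for the cobordism $(Y;M,M_1)\subset N$, so that the proof of Theorem~\ref{thm:murineq} given in Subsection~\ref{ss:2.1} applies as stated. This asks for a single smooth complex-valued function on $N$ whose zero set is $Y$ and whose argument simultaneously cuts out $\Sigma$ and each $\Sigma_j$ as a common level set; assembling such a function from the global holomorphic equation of $X$ together with its local analytic branches near each $x_j$ is, I expect, essentially the content of Proposition~\ref{prop:newSeifert}.
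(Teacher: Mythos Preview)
Your proposal is correct and follows essentially the same route as the paper: connect the small Milnor balls into a single ball inside $B$, apply Theorem~\ref{thm:murineq} to the resulting cobordism, and identify $Y\cup\bigsqcup_j\Sigma_j$ with the smoothing $X^s$ via $\Sigma_j\approx X^s\cap B_{x_j}$. The only cosmetic difference is that the paper uses a ``boleadoras'' configuration---an auxiliary ball $B_0\subset B\setminus X$ joined by separate arcs $\gamma_j$ to each $B_{x_j}$---rather than your tree of arcs, but both produce a standard ball with the required boundary intersection.

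One small correction to your closing paragraph: the verification of Assumption~\ref{ass:ass} is not the content of Proposition~\ref{prop:newSeifert} (which concerns the fibration at infinity of a tame polynomial) but rather the elementary observation recorded in Remark~\ref{rem:ass}(a). When $X=\{f=0\}$ and $\Sigma=(\arg f)^{-1}(\delta)$ for some regular value $\delta$, one simply chooses the auxiliary ball and the connecting arcs to miss $(\arg f)^{-1}(\delta)$; then $\arg f$ itself, restricted to the cobordism region, is the required function, and its $\delta$-level on the inner sphere is automatically $\bigsqcup_j\Sigma_j$.
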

\begin{figure}
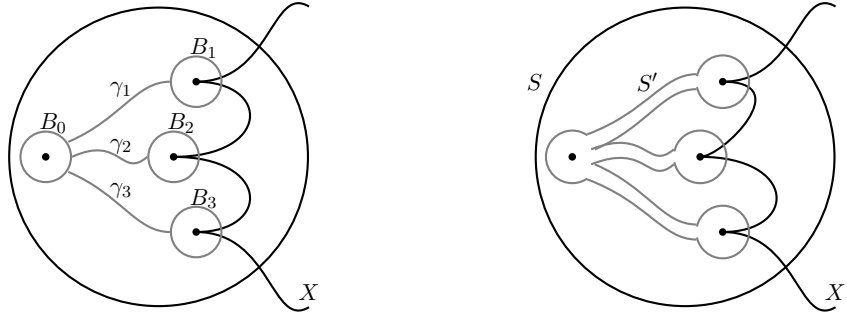

\boleaspic
\caption{The cobordism between local and global links}\label{fig:boleas}
\end{figure}
\begin{proof}
For any $j=1,\dots,k$ let us pick a small Milnor balls $B_j$ around $x_j$ such that the balls are
pairwise disjoint; hence $\partial B_j\pitchfork X$, $\partial B_j\cap X=\LS_{x_j}$ and
$ \Sigma_j\subset\partial B_{j}$.
Let $B_0\subset B$ be a ball disjoint from $X\cup B_1\cup\dots\cup B_k$.
For each $j=1,\dots,k$, let $\gamma_j$ be a smooth, closed curve joining $\partial B_0$ with $\partial B_k$,
such that $\gamma_j$ is disjoint from $ \Sigma_j$ and from all other balls $B_{l}$
 and other curves $\gamma_l$ for $l\neq j$. We also assume that the relative
 interior of $\gamma_j$ is disjoint from $B_0$ and $B_j$ (see Figure~\ref{fig:boleas}).
Let $U_j$ be a small tubular neighbourhood of $\gamma_j$ and consider
\[U=B_0\cup\bigcup_{j}(B_j\cup U_j), \ \ \ \mbox{and} \ \ \  Y=X\cap B\setminus
\bigcup_{j}\intr B_j.\]

The assumptions on $\gamma_j$'s guarantee that
\begin{itemize}
\item $U$ is diffeomorphic to a standard ball;
\item $\oS:=\partial U$ (after possibly smoothing corners) is a sphere transverse to $X$;
\item $\oM:=\oS\cap X$ is a disjoint union
  $\LS_{x_1}\sqcup\dots\sqcup\LS_{x_k}$ of the local links;
\item $ \oSig:= \Sigma_1\sqcup\dots\sqcup  \Sigma_k$ is a Seifert surface for $\oM$.
\end{itemize}
In particular
\[\sigma_{\oM}(\xi)=\sum_{j}\sigma_{\LS_{x_j}}(\xi)\ \text{ and } \
n_{\oM}(\xi)=\sum_{j} n_{\LS_{x_j}}(\xi).\]
We say that the cobordism of links
$$(Y;M,\oM) \subset
(B\setminus \intr U;S,\oS)~\approx~
S^{2n+1} \times ([0,1];\{0\},\{1\})$$
is constructed by the `boleadoras' trick. The generalized Murasugi--Kawauchi inequality of Theorem~\ref{thm:murineq} gives
\[
\Big|\sigma_{M}(\xi)-\sum_{j}\sigma_{\LS_{x_j}}(\xi)\Big|\leqslant b_n(Y\cup\oSig\cup \Sigma)
-b_n(\oSig)-b_n( \Sigma)+n_{M}(\xi)+\sum_{j}n_{\LS_{x_j}}(\xi).\]
Now $\Sigma_j\approx X^s\cap B_j$ by \cite{Milnor-book}, hence
$Y\cup\oSig\approx X^s$, and the statement follows.
\end{proof}
\begin{remark}\label{rem:ass} (a) \ 
If $X$ is given by $\{f=0\}$ for a complex polynomial $f$ and there exists $\delta\in S^1$ such that $\S=(\arg f)^{-1}(\delta)$ (for instance if
the link $M$ is fibred), then we can find $B_0$ and the paths $\gamma_1,\dots,\gamma_k$ (used in the proof of Proposition~\ref{prop:bol})
disjoint from $(\arg f)^{-1}(\delta)$. Then $(\arg f)^{-1}(\delta)\cap S'$ is a disjoint union of the Seifert surfaces for $M_{x_1},\dots,M_{x_k}$.
This proves that Assumption~\ref{ass:ass} in this case is satisfied.

(b) \  Proposition~\ref{prop:bol} is the main tool in the proof of the semicontinuity
of the mod 2 spectrum, cf. Section \ref{s:4}. We remark that if $n=1$, this proposition
allows us to prove semicontinuity of the spectrum
without referring to \cite{Bo} and \cite{BNR}. (The article \cite{BN2} uses \cite{Bo}.)

(c) \ 
The space $U$, as drawn in the picture above, resembles
the South American throwing weapon \emph{boleadoras} \cite{Bol}, hence the name of the cobordism construction
in \ref{prop:bol}.
\end{remark}

\section{Hermitian Variation Structures and Mixed Hodge Structures}
\subsection{Generalities about hermitian variation structures}\label{ss:hvslinks}
Variation structures were introduced in \cite{Nem2}. As it was shown in \cite{BN} and \cite{BN2} they form a bridge between knot theory and Hodge theory.
Let us recall shortly the definition, referring to \cite{Nem2} or \cite[Section 2]{BN} for all details and further references.

\begin{definition}
Fix a sign $\varepsilon=\pm 1$.
An \emph{$\varepsilon$--hermitian  variation structure} (in short: HVS) consist of the quadruple
$(U;b,h,V)$, where $U$ is a complex linear space,
$b\colon U\to U^*$ is an $\varepsilon$--hermitian endomorphism (it can be regarded as
a $\varepsilon$--symmetric pairing on $U\times U$),
$h\colon U\to U$ is an automorphism preserving $b$, and $V\colon U^*\to U$
is an endomorphism such that
\[V\circ b=h-I\text{ and }\ol{V}^*=-\varepsilon V\circ \ol{h}^*.\]
Here $\ol{\cdot}$ denotes the complex conjugate and $*$ the duality.
\end{definition}
We shall call a HVS \emph{simple} if $V$ is an isomorphism. In this case $V$ determines $b$ and $h$ completely by the formulae $h=-\varepsilon V(\ol{V}^*)^{-1}$
and $b=-V^{-1}-\varepsilon {\ol{V}^*}^{-1}$. It was proved in \cite{Nem2} that each simple variation structure is a direct sum of indecomposable ones,
moreover the decomposable
ones can be completely classified. More precisely, for any $k\geqslant 1$ and any $\lambda\in\mathbb{C}$  with $|\lambda|=1$,  there are two
 structures $\mw_{\lambda}^k(\pm 1)$ (up to an isomorphism). In their  case
$h$ is a single Jordan block of size $k$ with eigenvalue $\lambda$. Furthermore, for any $k\geqslant 1$ and any
$\lambda\in\mathbb{C}$ such that $0<|\lambda|<1$ there exists a single structure $\mv_{\lambda}^{2k}$. In this case
$h$ is a direct sum of two Jordan blocks of size $k$: one block has eigenvalue $\lambda$, the other $1/\bar{\lambda}$. In particular for each HVS $\mv$ there exists
a unique decomposition
\begin{equation}\label{eq:decomp}
\mv=\bigoplus_{\substack{0<|\lambda|<1\\ k\geqslant 1}} q^k_\lambda\cdot   \mv^{2k}_\lambda
\oplus
\bigoplus_{\substack{|\lambda|=1\\ k\geqslant 1, \ u=\pm 1}} p^k_\lambda(u)\cdot  \mw^{k}_\lambda(u),
\end{equation}
where $q^k_\lambda$ and $p^k_\lambda(\pm 1)$ are certain non--negative integers. Here we write $m\cdot\mv$ for a
direct sum of $m$ copies of $\mv$. Next
we recall the definition of the spectrum associated with  a HVS.
\begin{definition}
Let $\mv$ be a HVS. Let $p^k_\lambda(\pm 1)$ and $q^k_\lambda$ be the integers defined by \eqref{eq:decomp}.
The {\it extended spectrum} $ESp$ is the union $ESp=Sp\cup ISp$, where
\begin{itemize}
\item[(a)] $Sp$, the {\it spectrum},  is a finite set of real numbers from the interval $(0,2]$ with integral multiplicities such that any real
number $\alpha$ occurs in $Sp$ precisely $s(\alpha)$ times, where
\[s(\alpha)=\sum_{n=1}^\infty\sum_{u=\pm1}\left(\frac{2n-1-u(-1)^{\lfloor \alpha\rfloor}}
{2}p^{2n-1}_\lambda(u)+np^{2n}_\lambda(u)\right), \ \
(e^{2\pi i\alpha}=\lambda).\]
\item[(b)] $ISp$ is the set of complex numbers from $(0,2]\times i\mathbb{R}$,
$ISp\cap\mathbb{R}=\emptyset$, where $z=\alpha+i\beta$ occurs
in $ISp$ precisely $s(z)$ times, where
\[
s(z)=\begin{cases}
\sum k\cdot q^k_\lambda&\text{if $\alpha\leqslant1$, $\beta>0$ and $e^{2\pi i z}=\lambda$}\\
\sum k\cdot q^k_\lambda&\text{if $\alpha>1$, $\beta<0$ and $e^{2\pi i z}=1/\bar{\lambda}$}\\
0&\text{if $\alpha\leqslant 1$ and $\beta<0$, or $\alpha> 1$ and $\beta>0$}.
\end{cases}
\]
\end{itemize}
\end{definition}
We have the following relation
\[|ESp|=\dim U=\deg\det(h-t\Id).\]

Main motivation for introducing HVS comes from singularity theory, cf. \cite{AGV,Nem2}.

\begin{lemma}\label{lem:hvsofsing}
Let $X\subset\mathbb{C}^{n+1}$ be a complex hypersurface with an isolated singularity
$x\in X$. Let $S_x$ be a small sphere centered at $x$ and
let $\pi\colon S_x\setminus X\to S^1$ be the Milnor fibration with fiber $ \Sigma$.
Let $U=H_n( \Sigma;\mathbb{C})$, $b$ be the intersection form on $U$
and $h\colon U\to U$ be the homological monodromy.
Finally, let $V\colon H_n( \Sigma,\partial  \Sigma;\mathbb{C})\to H_n( \Sigma;\mathbb{C})$ be the
Picard--Lefschetz variation
operator. Then the quadruple $(U;b,h,V)$ constitutes a simple HVS with $\varepsilon=(-1)^n$.
\end{lemma}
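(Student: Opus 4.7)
The plan is to verify in turn each of the four defining identities of a hermitian variation structure for the quadruple $(U;b,h,V)$ attached to the Milnor fibration, and then to deduce simplicity from the classical Milnor description of the fiber.

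I would start from the hermiticity of $b$. Since $\Sigma$ is an oriented $2n$-dimensional manifold with boundary, the middle-dimensional intersection pairing on $H_n(\Sigma;\mathbb{C})$ acquires the sign $(-1)^{n\cdot n}=(-1)^{n}$ under transposition of its two arguments, by the standard cup-product calculation. This forces $\varepsilon=(-1)^{n}$ and, after passing to the dual description $b\colon U\to U^{*}$, yields $b^{*}=\varepsilon\,b$. The monodromy invariance $h^{*}bh=b$ is then immediate from the geometric fact that the diffeomorphism representing $h$ can be chosen to be the identity on a collar of $\partial\Sigma$, hence preserves intersection numbers of absolute middle-dimensional cycles.

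Next I would identify $U^{*}$ with $H_n(\Sigma,\partial\Sigma;\mathbb{C})$ via Poincar\'e--Lefschetz duality combined with the universal coefficient isomorphism; under this identification $b$ corresponds to the natural map $i_{*}\colon H_n(\Sigma)\to H_n(\Sigma,\partial\Sigma)$ induced by the inclusion of pairs. By its very definition, the Picard--Lefschetz variation sends a relative class $[c]\in H_n(\Sigma,\partial\Sigma;\mathbb{C})$ to $[h_{*}\tilde c-\tilde c]\in H_n(\Sigma;\mathbb{C})$, where $\tilde c$ is any chain-level representative; the class is well-defined and absolute precisely because $h$ fixes $\partial\Sigma$ pointwise. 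Composing with $b=i_{*}$ produces $V\circ b=h-I$ on the nose.

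The remaining compatibility $\overline{V}^{*}=-\varepsilon\,V\circ\overline{h}^{*}$ is the most delicate axiom and I expect it to be the main obstacle. My approach is to derive it algebraically from the identities already established: dualizing $V\circ b=h-I$ gives $b^{*}\circ V^{*}=h^{*}-I$, and substituting $b^{*}=\varepsilon b$ together with the rearranged monodromy invariance $b=h^{*}bh$ produces an expression for $V^{*}$ in terms of $V$ and $h$; passing to complex conjugates then delivers the required formula. As a geometric sanity check, one can verify the identity directly on a distinguished basis of vanishing cycles/Lefschetz thimbles, where both $h$ and $V$ are explicitly given by the Picard--Lefschetz formulas for a Morsification. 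Finally, simplicity follows from Milnor's theorem that $\Sigma$ is homotopy-equivalent to a bouquet of $\mu$ copies of $S^{n}$: both $H_n(\Sigma;\mathbb{C})$ and $H_n(\Sigma,\partial\Sigma;\mathbb{C})$ then have dimension $\mu$, so it suffices to check that $V$ is injective, which is read off from the Wang sequence of the Milnor fibration combined with Alexander duality in $S_x$, as in~\cite{Nem2}. This completes the verification that $(U;b,h,V)$ is a simple HVS with $\varepsilon=(-1)^{n}$.
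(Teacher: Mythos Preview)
The paper does not supply a proof of this lemma: it is stated as a known result, with the references \cite{AGV,Nem2} given immediately before, and the remark following it records the key geometric input that the Seifert matrix equals $(V^{-1})^{T}$.

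Your treatment of the first three axioms is correct. The genuine gap is in your proposed derivation of the fourth identity $\overline{V}^{*}=-\varepsilon\, V\overline{h}^{*}$. The algebraic manipulation you outline (dualize $Vb=h-I$, then substitute $\overline{b}^{*}=\varepsilon b$ and $b=\overline{h}^{*}bh$) determines $\overline{V}^{*}$ only on the image of $b$ in $U^{*}$. When $b$ is degenerate---equivalently when $1$ is an eigenvalue of $h$, which already happens for the plane-curve node $x^{2}+y^{2}$---the relation $Vb=h-I$ leaves $V$ unconstrained on a complement of $\operatorname{im} b$, so the fourth axiom is not a formal consequence of the first three. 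What you present as a ``sanity check'' via Picard--Lefschetz formulas is in fact where the actual content lies, not a redundancy.

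The argument in the cited references proceeds instead through the Seifert (linking) form $L$ on $H_{n}(\Sigma)$: one identifies $V=(L^{T})^{-1}$ geometrically, and then the fourth axiom becomes the classical relation $(-1)^{n+1}L=L^{T}h$ between the two push-offs of a cycle in the Milnor fibration. Simplicity is then immediate, since $L$ is non-degenerate for a fibred link; note that the Wang sequence by itself controls $h-I$ rather than $V$, so the Seifert-form input (or an equivalent Alexander-duality statement, as you indicate by citing \cite{Nem2}) is again what does the work there.
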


\begin{definition}
The HVS defined in Lemma~\ref{lem:hvsofsing} is called the HVS of $(X,x)\subset (\mathbb{C}^{n+1},x)$.
\end{definition}

\begin{remark}
The Seifert matrix (associated with $\Sigma$) is the inverse transpose of $V$.

Conversely, any  non-degenerate Seifert matrix $S$ associated with any (topologically defined)
link $M\subset S^{2n+1}$ determines a simple HVS via $V=(S^{-1})^T$.
\end{remark}

In the above algebraic/analytic  case, by the Monodromy Theorem,
$h$ has all eigenvalues on the unit circle. In particular $ESp=Sp$.
The spectrum associated with
the {\it hermitian  variation structure} of an isolated singularity will be denoted by  $\sph$.

On the other hand, for an isolated singular point $x$ one can define a {\it mixed
Hodge structure} on the $n$-th cohomology of its Milnor fiber, which determines  a spectrum denoted by $\spm$,
cf. work of Steenbrink and Varchenko \cite{Stee,St,Var}. (We will use the same normalization as in
\cite[2.3]{BN2}. In particular, $\spm$ is a subset of $(0, n+1)$.)

The relation between the two sets of spectral numbers is given in Proposition~\ref{prop:mod2red}. Before
we state it, we need to fix some terminology.
\begin{definition}\label{def:specmod2}
We denote
the mod\,2 reduction of the spectrum $\spm$   by $\spt$.
This means that $\spt$ is a  finite set of  rational  numbers from the interval $(0,2]$ with integral
multiplicities, and
for any $\alpha\in (0,2]$ the multiplicity of the spectral number in
$\spt$ is the sum of the multiplicities of the spectral numbers $\{\alpha+2j\}_{j\in\mathbb{Z}}$ in $\spm$.
\end{definition}

\begin{proposition}[\expandafter{\cite[Theorem~6.5]{Nem2}}]\label{prop:mod2red}
 $\sph$ is the mod\,2 reduction of $\spm$ for any isolated hypersurface singularity.
\end{proposition}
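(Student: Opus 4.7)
The statement is cited from \cite[Theorem 6.5]{Nem2}; here is the plan one would follow. First, by the Monodromy Theorem all eigenvalues of $h$ lie on the unit circle, so in the canonical decomposition \eqref{eq:decomp} of the HVS associated to the singularity only the indecomposables $\mw^k_\lambda(u)$ appear. Both $\sph$ and the mod 2 reduction $\spt$ of $\spm$ are additive over direct sums and over Jordan blocks of the monodromy respectively, so the claim reduces to a per-summand verification: for each indecomposable $\mw^k_\lambda(u)$, the contribution to $\sph$ must equal the mod 2 reduction of the contribution of the associated Jordan block of the limit MHS to $\spm$.

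The key step is to set up a bijection between HVS indecomposables and the primitive summands of Steenbrink's MHS. To each Jordan block of size $k$ of the semi-simple-shifted monodromy with eigenvalue $\lambda$, the monodromy weight filtration associates a one-dimensional primitive piece at the top graded level, on which the Hodge filtration selects a distinguished integer $p$; the $k$ spectral numbers of the block then lie in a length-$k$ arithmetic progression in $\mathbb{Q}$ with common difference $1$ around $p+\alpha_0$, where $\alpha_0\in(0,1]$ is determined by $e^{2\pi i\alpha_0}=\lambda$. Reducing modulo $2$, exactly $\lceil k/2\rceil$ of these spectral numbers land in one of $\alpha_0, \alpha_0+1\in(0,2]$ and $\lfloor k/2\rfloor$ in the other, the split being governed by the parity of $p$. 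The Steenbrink polarization, which descends from the intersection form via the Picard--Lefschetz variation operator $V$, restricts to a definite hermitian form on the primitive piece, and its sign is exactly the invariant $u=\pm 1$ attached to $\mw^k_\lambda(u)$.

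Granting this correspondence, the proof concludes by a direct combinatorial verification that the formula defining $s(\alpha)$ on a single summand $\mw^k_\lambda(u)$ assigns the correct multiplicities to $\alpha_0$ and $\alpha_0+1$. For odd $k=2n-1$ the factor $\frac{2n-1-u(-1)^{\lfloor\alpha\rfloor}}{2}$ takes the values $n$ and $n-1$ on the two residue classes, matching the distribution dictated by the parity of $p$; for even $k=2n$ it is constantly $n$, in accord with the even split on the MHS side. The main obstacle is the sign-matching of the previous paragraph: identifying the invariant $u$ with the signature of the Steenbrink polarization on the primitive subspace requires a careful comparison of the Seifert form with the intersection form twisted by $(h-\Id)^{-1}$ on the non-unipotent part of the vanishing cohomology, and this identification is the technical core of \cite[Section 6]{Nem2}.
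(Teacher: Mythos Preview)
The paper does not supply a proof of this proposition; it is recorded purely as a citation of \cite[Theorem~6.5]{Nem2}. Your proposal is therefore not competing with any argument in the present paper, and you are right to frame it as a sketch of the proof from the cited source. As such, your outline is a faithful summary of the strategy in \cite{Nem2}: use the Monodromy Theorem to eliminate the $\mv^{2k}_\lambda$ summands, reduce by additivity to a single indecomposable $\mw^k_\lambda(u)$, match it with a primitive Jordan block of the limit MHS whose $k$ spectral numbers form an arithmetic progression of step $1$, and check that the defining formula for $s(\alpha)$ reproduces the mod $2$ distribution. You also correctly isolate the genuine technical content---the identification of the sign $u$ with the sign of the polarization on the primitive piece---and attribute it to \cite[Section~6]{Nem2}. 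There is nothing to correct here.
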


Note that the spectrum $\sph$, by its very definition,  can  be  determined from the Seifert form
of the link. The point is that it can be fully recovered from the signatures as well. 
\begin{proposition}[\expandafter{\cite[Corollary~2.4.6]{BN2}}]\label{prop:sing}
Let $x$ be an isolated singular point of the hypersurface $X\subset\mathbb{C}^{n+1}$,
and $\mv=(U;b,h,V)$ the corresponding HVS.
Let $\alpha\in[0,1)$ be such that $\xi=e^{2\pi i\alpha}$ is not an eigenvalue of the monodromy operator $h$.
Then we have
\begin{align*}
|\sph\cap (\alpha,\alpha+1)|&=\frac12(\dim U-\sigma_{\LS_x}(\xi))\\
|\sph\setminus [\alpha,\alpha+1]|&=\frac12(\dim U+\sigma_{\LS_x}(\xi)).
\end{align*}
\end{proposition}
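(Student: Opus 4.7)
The plan is to reduce the statement to a verification on each indecomposable summand of the hermitian variation structure $\mv$. Since the singular point $x$ is isolated, Lemma~\ref{lem:hvsofsing} gives that $\mv$ is simple, and by the Monodromy Theorem all eigenvalues of $h$ lie on the unit circle, so the decomposition \eqref{eq:decomp} reduces to
$$\mv = \bigoplus_{\lambda,k,u} p^k_\lambda(u)\cdot \mw^k_\lambda(u),$$
with no $\mv^{2k}_\lambda$-summands. Every quantity in the two claimed identities is additive under direct sum: $\dim U$ by definition, $|\sph\cap I|$ because the spectrum adds as a multiset for any interval $I$, and $\sigma_{\LS_x}(\xi)$ because the Seifert matrix of a direct sum is block-diagonal, so that $(1-\xi)S+(-1)^{n+1}(1-\ol{\xi})S^T$ decomposes accordingly and its signature is the sum of the block signatures. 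Hence it suffices to prove the two equalities on each indecomposable piece $\mw^k_\lambda(u)$.

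On such a block one has $\dim U=k$, and the contribution to $\sph$ is read off directly from the formula defining $s(\alpha)$: it counts the two preimages $\alpha'\in (0,2]$ of $\lambda$ under $\alpha'\mapsto e^{2\pi i\alpha'}$ with multiplicities depending on $u$, the parity of $k$, and the parity of $\lfloor\alpha'\rfloor$. For the signature side, the block $\mw^k_\lambda(u)$ admits an explicit normal form for its Seifert matrix, from which $\sigma(\xi)$ can be computed for $\xi\in S^1\sm\{1,\lambda\}$; the value depends on the sign $u$, the parities of $k$ and $n$ (entering through $\varepsilon=(-1)^n$), and the arc component of $S^1\sm\{1,\lambda\}$ in which $\xi$ lies. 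The first equality then reduces to the block-wise identity $|Sp\cap(\alpha,\alpha+1)|=\tfrac12(k-\sigma(\xi))$.

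The hypothesis that $\xi=e^{2\pi i\alpha}$ is not an eigenvalue of $h$ ensures that neither $\alpha$ nor $\alpha+1$ is a spectral number, so $\sph$ is partitioned as $(\sph\cap(\alpha,\alpha+1))\sqcup(\sph\sm[\alpha,\alpha+1])$; combined with $|\sph|=\dim U$ (valid because all eigenvalues are on the unit circle, so $ESp=Sp$), the second equality follows from the first. The genuine work is thus the block-by-block check, which is a finite case analysis in the four parameters $(k\bmod 2,\,u,\,\varepsilon,\,\text{arc component of }\xi)$. This bookkeeping is the main—and only—technical obstacle; everything else is formal additivity and the hypothesis on $\xi$.
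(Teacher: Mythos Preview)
The paper does not give its own proof of this proposition; it is simply quoted from \cite[Corollary~2.4.6]{BN2}. Your outline---decompose $\mv$ into the indecomposable pieces $\mw^k_\lambda(u)$ and verify the identity blockwise, using additivity of $\dim U$, of the spectrum, and of the signature---is exactly the strategy carried out in \cite{BN2} (building on the classification in \cite{Nem2}), so the approach is correct and matches the source.

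The only caveat is that you stop short of the actual content: you describe the block-by-block verification as ``a finite case analysis'' without performing it. That case analysis is precisely where the result lives---one needs the explicit normal forms of the variation maps (equivalently, Seifert matrices) for each $\mw^k_\lambda(u)$ from \cite{Nem2}, and then the computation of $\sigma(\xi)$ on each block and its comparison with the spectral multiplicities. None of this is deep, but it is not empty either, and a reader who has not seen \cite{Nem2,BN,BN2} cannot reconstruct it from your sketch. Your reduction of the second identity to the first via $|\sph|=\dim U$ and the partition forced by the eigenvalue hypothesis is clean and correct.
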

\begin{remark}\label{rem:nulnonzero}
The dimension $\dim U=b_n( \Sigma)$ is the Milnor number of the singularity.
Moreover, the condition that $\xi$ is not eigenvalue of the monodromy is equivalent to  $n_{\LS_x}(\xi)=0$.
This follows from  Lemma~\ref{lem:sigmaalex} and from the fact that the
 Seifert matrix of the link corresponding to the Milnor fiber is non-degenerate.
\end{remark}

In Proposition~\ref{prop:sing} we assume that $\alpha\in[0,1)$. If $\alpha=1$, the statement of proposition still holds. In fact
$\sph\cap(1,2)=\sph\setminus[0,1]$ and $\sph\setminus[1,2]=\sph\cap(0,1)$. Hence the case $\alpha=1$ is equivalent to the case
$\alpha=0$.

\subsection{Spectrally tame polynomials.}\label{ss:stp}
We introduce now a new class of tame polynomials, namely {\it spectrally tame}.
We add this new terminology (to the rather big variety of different versions of
`tame' polynomials) in order to make precise, what assumptions are needed to obtain
a topological proof for the semicontinuity of the mod\,2 reduction of the MHS-spectrum at infinity associated with
 polynomial maps.  Below, we
shall give some examples of spectrally tame polynomials as well.

Let $P\colon\mathbb{C}^{n+1}\to\mathbb{C}$ be a polynomial map.
Let ${\mathcal B}$ be the bifurcation set of $P$, a finite subset
${\mathcal B}\subset \mathbb{C}$ such that the restriction of $P$ is
a $C^\infty$ locally trivial fibration over $\mathbb{C}\setminus {\mathcal B}$.
Let $D\subset \mathbb{C}$ be a sufficiently large closed disc centered at the origin
so that ${\mathcal B}\subset D$. Finally, take $R$ sufficiently large, such that for any $R'\geqslant R$
the boundary of any closed ball $B_{R'}\subset \mathbb{C}^{n+1}$ centered at the origin intersects
any $P^{-1}(c)$, $c\in\partial D$,  transversally.

For a fixed  value $c\in \partial D$ set $X_c:=P^{-1}(c)$, the generic fiber of $P$,
 and let $M_c$ be the link at infinity $X_c\cap \partial B_R$. Notice that by the above choices,
 the restriction of $P$ on $P^{-1}(\partial D)$, or on $P^{-1}(\partial D)\cap \partial B_R$
 is the fibration of $P$ at infinity.

Since $X_c$ is Stein, $H_i(X_c,\mathbb{Z})=0$ for $i>n$ and $H_n(X_c,\mathbb{Z})$ is free.
Moreover, $H^n(X_c,{\mathbb Q})$ carries a MHS, the {\it `mixed Hodge structure of $P$ at infinity'},
see e.g. \cite{Di,NS,Sa}. We add to these facts the following.

\begin{proposition}\label{prop:newSeifert}
Consider $\phi(z):=P(z)/|P(z)|:\partial B_R\setminus P^{-1}(\intr D)\to S^1$,
the Milnor map restricted on the complement of  $P^{-1}(\intr D)$. Then
$$\phi:(\partial B_R\setminus P^{-1}(\intr D), \partial B_R\cap P^{-1}(\partial D))\to S^1$$
is a $C^\infty$ locally trivial fibration of pairs of spaces over $S^1$. This fibration is $C^\infty$
equivalent with the fibration at infinity associated with $P$
$$P:(P^{-1}(\partial D)\cap B_R,P^{-1}(\partial D)\cap \partial B_R)\to \partial D.$$
In particular, $M_c\subset \partial B_R$ admits a Seifert surface, namely $\Sigma:=\phi^{-1}(c/|c|)$,
which is diffeomorphic to $X_c\cap B_R$.
\end{proposition}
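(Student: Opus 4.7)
The plan is to establish both fibration structures and their equivalence simultaneously, via a smooth flow argument based on Ehresmann's theorem applied to the ambient map $P$.  The first step is to derive the fibration at infinity: the map
\[
P\colon(P^{-1}(\partial D)\cap B_R,\,P^{-1}(\partial D)\cap\partial B_R)\longrightarrow\partial D
\]
is a proper submersion of pairs --- a submersion on the interior because $\mathcal B\subset\intr D$, and a submersion on $\partial B_R$ by the transversality hypothesis --- so by Ehresmann's theorem for manifolds with corners it is a $C^\infty$ locally trivial fibration of pairs over $\partial D$.  By openness of transversality and compactness of $\partial D$ this extends to an annular neighborhood $A\supset\partial D$, and by the polynomial tameness at infinity of $P$ (for $R$ sufficiently large the critical values of $P|_{\partial B_R}$ cluster around the finite set of critical values at infinity of $P$, which one may arrange to lie in $\mathcal B\subset\intr D$), further to an ambient fibration of pairs
\[
P\colon(P^{-1}(\CC\sm\intr D)\cap B_R,\,P^{-1}(\CC\sm\intr D)\cap\partial B_R)\longrightarrow\CC\sm\intr D.
\]

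Given this ambient fibration, choose an Ehresmann connection on $Z:=B_R\cap P^{-1}(\CC\sm\intr D)$ whose lift $w$ of the outward radial vector field on $\CC\sm\intr D$ is smooth on $Z$ and strictly outward-pointing on $\partial B_R$; then automatically $dP(w)=P/|P|$ forces $\arg P$ to be preserved and $|P|$ to increase linearly along the flow.  Starting from any $z_0\in P^{-1}(\partial D)\cap B_R$, the flow of $w$ exits $B_R$ through $\partial B_R$ after a finite hitting time $t(z_0)\geq 0$ (with $t(z_0)=0$ exactly on the corner $P^{-1}(\partial D)\cap\partial B_R$), smoothly varying in $z_0$ by the implicit function theorem applied to $|z|^2-R^2$.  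Defining $\Psi(z_0)$ to be the exit point yields a $C^\infty$ diffeomorphism of pairs from $(P^{-1}(\partial D)\cap B_R,\,P^{-1}(\partial D)\cap\partial B_R)$ onto $(\partial B_R\sm P^{-1}(\intr D),\,\partial B_R\cap P^{-1}(\partial D))$ that intertwines the rescaled projection $P/r_D$ with $\phi$ (where $r_D$ is the radius of $D$), since $\arg P\circ\Psi=\arg P$.  This shows at once that $\phi$ is a $C^\infty$ locally trivial fibration of pairs over $S^1$, that the two fibrations are $C^\infty$-equivalent, and that the Seifert surface $\Sigma:=\phi^{-1}(c/|c|)=\Psi(X_c\cap B_R)$ is diffeomorphic to $X_c\cap B_R$.

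The principal technical obstacle is the construction of the Ehresmann connection so that its radial lift $w$ is both smooth on $Z$ and strictly outward-pointing along $\partial B_R$.  The freedom here is the complex $n$-dimensional kernel of $dP$, which one uses to adjust the inward/outward component of $w$ at $\partial B_R$ without disturbing the constraint $dP(w)=P/|P|$.  Crucially, this adjustment depends on the extended transversality established in the first paragraph: without it, $\partial B_R$ could be tangent to $P^{-1}(c)$ for some exterior $c$, forcing $w$ to be tangent to $\partial B_R$ at such points and causing the hitting time to degenerate.  The whole argument therefore rests on the ``$D$ and $R$ sufficiently large'' tameness hypothesis encoded in the setup.
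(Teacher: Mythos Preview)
The paper's proof is just a pointer to Theorems~10--11 of \cite{NZ2}, so there is no detailed argument to compare against. Your strategy --- Ehresmann for the fibration over $\partial D$, then a radial-in-$|P|$ flow to carry one fibration onto the other --- is the natural one and is in the spirit of Milnor's original argument, but the step you correctly isolate as ``the principal technical obstacle'' is not properly justified.

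The assertion that for large $R$ the critical values of $P|_{\partial B_R}$ ``cluster around the finite set of critical values at infinity of $P$'' and hence lie in $\intr D$ is false in general. Take $P(x,y)=x$: here $\mathcal B=\emptyset$, yet the critical values of $P|_{\partial B_R}$ fill the entire circle $|c|=R$, which escapes to infinity as $R$ grows and certainly does not sit inside $D$. Consequently the ``ambient fibration of pairs over $\CC\sm\intr D$'' that you invoke simply does not exist, and with it you lose the transversality $\partial B_R\pitchfork P^{-1}(c)$ for $|c|>r_D$ on which your outward-pointing adjustment of $w$ along $\partial B_R$ explicitly depends.

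The hypothesis actually at your disposal has a different shape: $\partial B_{R'}\pitchfork P^{-1}(c)$ holds for \emph{all} $R'\geqslant R$ but only for $c\in\partial D$; equivalently, the Milnor set $\{z:\nabla P(z)\parallel\bar z\}$ avoids $P^{-1}(\partial D)\setminus\intr B_R$, with no control whatsoever over fibers $P^{-1}(c)$ with $|c|>r_D$. The argument in \cite{NZ2} (written for semitame polynomials, where the same asymmetric hypothesis holds with $\partial D$ replaced by a point) builds the required vector field using precisely this one-sided control together with a curve-selection argument at infinity, not a blanket transversality over the exterior of $D$. If you want to salvage the radial flow directly, what you actually need to prove is that at every tangency point of $\partial B_R$ with a fiber $P^{-1}(c)$, $|c|\geqslant r_D$, one has $\operatorname{Re}\bigl(P(z)\,\overline{\sum_i z_i\,\partial_iP(z)}\bigr)>0$; this is the genuine content of the outward-pointing condition there, and it does not follow from your clustering claim.
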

\begin{proof}
The proof is similar to the proofs of Theorems 10 and 11 from \cite{NZ2}, with the only modification that
 the arbitrary disc $D$ used in [loc.cit.] for semitame polynomials should be replaced by a sufficiently large
disc $D$ containing all the bifurcation values.
\end{proof}
Next, we analyze the Seifert form $S$ associated with the Seifert surface $\Sigma$ defined above.
\begin{proposition}\label{prop:Seinondeg} {\rm (a)} Set  ${\mathcal D}:=
P^{-1}(D)\cap \partial B_R$ and  $\Phi_{I}:=\bigcup_{t\in I}\phi^{-1}(e^{2\pi it})$ for any subset $I\subset [0,1]$. We write
$\Phi_{1}$ for $\Phi_{\{1\}}$ (in particular $\Phi_{1}$ is a Seifert surface).
Then the groups
 $H_n(\Phi_1\cup{\mathcal D},\mathbb{Z})$ and $H_n(\Phi_1,\mathbb{Z})^*$
are isomorphic. In fact one has the following sequence of isomorphisms, denoted by $s$:
$$H_n(\Phi_1\cup{\mathcal D})
\stackrel{\partial^{-1}}{\longrightarrow}
H_{n+1}(S^{2n+1},\Phi_1\cup{\mathcal D})
\stackrel{(1)}{\longrightarrow}
H_{n+1}(S^{2n+1},\Phi_{[0,\frac{1}{2}]}\cup {\mathcal D})
\stackrel{(2)}{\longrightarrow}
$$
$$
H_{n+1}(\Phi_{[\frac{1}{2},1]},\ \partial \Phi_{[\frac{1}{2},1]} )
\stackrel{(3)}{\longrightarrow}
H_n(\intr \Phi_{[\frac{1}{2},1]}  )^*
\stackrel{(4)}{\longrightarrow}
H_n({\Phi_1})^*,  
$$
where $\partial^{-1}$ comes from the exact sequence of the pair,
{\rm (1)} and {\rm (4)} are induced by deformation retracts,
{\rm (2)} is an  excision, while {\rm (3)} is provided by Lefschetz duality.\\
{\rm (b)} Let $j:H_n(\Phi_1,\mathbb{Z})\to H_n(\Phi_1\cup{\mathcal{D}},\mathbb{Z})$ be induced by the inclusion.
Then the composition $$H_n(\Phi_1,\mathbb{Z})\stackrel{j}{\longrightarrow}
H_n(\Phi_1\cup{\mathcal{D}},\mathbb{Z})\stackrel{s}{\longrightarrow}H_n(\Phi_1,\mathbb{Z})^*$$
 can be identified with the Seifert form $S$ associated with $\Phi_1=\Sigma\subset \partial B_R$.\\
{\rm (c)} Let  $b_\infty$ and $h_\infty$ be
the intersection form and monodromy of $H_n(\Phi_1,\mathbb{Z})=
H_n(X_c,\mathbb{Z})$, cf.  \ref{prop:newSeifert}. Then, in matrix notation,
$$b_\infty=-\varepsilon S- S^T \ \  \ \mbox{and } \ \ \ -\varepsilon S^Th_\infty=S.$$
In particular, $h_\infty$ is an automorphism of $S$, that is $h_\infty^TSh_\infty=S$.
\end{proposition}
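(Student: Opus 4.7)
For (a), I would verify each map in the displayed chain is an isomorphism by a standard homological argument. The map $\partial^{-1}$ inverts the connecting homomorphism of the long exact sequence of the pair $(S^{2n+1},\Phi_1\cup\mathcal{D})$; this is bijective because $H_n(S^{2n+1})=H_{n+1}(S^{2n+1})=0$ for $n\geq 1$. Map (1) is induced by inclusion of pairs and is an isomorphism because the triviality of the fibration $\phi$ over the contractible arc $[0,1/2]$ yields a deformation retraction of $\Phi_{[0,1/2]}$ onto $\Phi_0=\Phi_1$, which extends by the identity over $\mathcal{D}$ to a deformation retraction of $\Phi_{[0,1/2]}\cup\mathcal{D}$ onto $\Phi_1\cup\mathcal{D}$. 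Map (2) is excision of the open locus $\Phi_{(0,1/2)}\subset \partial B_R$, whose closure lies in the interior of $\Phi_{[0,1/2]}\cup\mathcal{D}$; after smoothing corners along $\mathcal{D}\cap\Phi_{\{1/2,1\}}$, this produces the pair $(\Phi_{[1/2,1]},\partial\Phi_{[1/2,1]})$. Map (3) is Poincar\'e--Lefschetz duality for the compact oriented $(2n+1)$-manifold-with-boundary $\Phi_{[1/2,1]}$ followed by the universal coefficient theorem, which applies since $H_n(\Phi_{[1/2,1]})\cong H_n(X_c)$ is free ($X_c$ being Stein). Map (4) is dual to the deformation retraction $\Phi_{[1/2,1]}\to\Phi_1$.

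For (b), I would trace a class $\alpha\in H_n(\Phi_1)$ through $s\circ j$ and show that the resulting element of $H_n(\Phi_1)^*$ evaluates on $\beta\in H_n(\Phi_1)$ as $\lk(\alpha,\beta^+)=S(\alpha,\beta)$. A representative of $\partial^{-1}(j(\alpha))$ is any $(n+1)$-chain $A\subset S^{2n+1}$ whose boundary is homologous to $\alpha$ in $\Phi_1\cup\mathcal{D}$; a natural choice is the trace of $\alpha$ across the half-fibration $\Phi_{[0,1/2]}$ under the product trivialisation of $\phi$, closed off inside $\mathcal{D}$ using the fibration of pairs from Proposition~\ref{prop:newSeifert}. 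After excision transfers $A$ to a class in $H_{n+1}(\Phi_{[1/2,1]},\partial\Phi_{[1/2,1]})$, Lefschetz duality converts the pairing with $\beta$ (viewed in $H_n(\Phi_{[1/2,1]})$ via the retract to $\Phi_1$) into the oriented intersection of $A$ with $\beta$ inside $\Phi_{[1/2,1]}$. Because $A$ approaches $\Phi_1=\partial\Phi_{[1/2,1]}$ only from the $\Phi_{[0,1/2]}$-side, an infinitesimal push of $\beta$ off $\Phi_1$ into $\Phi_{[0,1/2]}$ to $\beta^+$ converts this intersection in $\Phi_{[1/2,1]}$ into the linking number in $S^{2n+1}$ of $\alpha$ with $\beta^+$, giving the Seifert form $S(\alpha,\beta)$.

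For (c), the two identities follow from (b) together with the topology of the fibration $\phi$ and the HVS framework with $\varepsilon=(-1)^n$. For the intersection form, the usual formula $b_\infty(\alpha,\beta)=\lk(\alpha,\beta^+)-\lk(\alpha,\beta^-)$ on a Seifert surface, combined with $\lk(\alpha,\beta^-)=\lk(\alpha^+,\beta)$ and the graded commutativity $\lk(u,v)=(-1)^{n+1}\lk(v,u)=-\varepsilon\lk(v,u)$ for $n$-cycles in $S^{2n+1}$, reproduces the simple HVS identity $b=-V^{-1}-\varepsilon(\ol V^*)^{-1}$ and yields $b_\infty=-\varepsilon S-S^T$ (with the sign of $b_\infty$ dictated by the HVS convention). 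For the monodromy identity, $h_\infty$ is the geometric monodromy of $\phi$ acting on $H_n(\Phi_1)=H_n(X_c)$; the relation $V\circ b_\infty=h_\infty-\Id$ of the HVS, together with the identification $V=(S^T)^{-1}$ (cf.~the remark after Lemma~\ref{lem:hvsofsing}) and the expression for $b_\infty$ just obtained, gives $h_\infty=-\varepsilon(S^T)^{-1}S$, equivalent to $-\varepsilon S^T h_\infty=S$. Transposing this yields $h_\infty^T S=-\varepsilon S^T$, and substituting once more produces $h_\infty^T S h_\infty=-\varepsilon S^T h_\infty=S$.

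The main obstacle I expect is the careful sign-and-orientation bookkeeping in (b): the positive normal direction off $\Phi_1$ must be consistently aligned with the direction of the $t$-parameter of $\phi$ through $\Phi_{[0,1/2]}$, and a wrong choice would interchange $S$ and $S^T$ or flip signs. A secondary subtlety in (c) is that $S$ can be degenerate (cf.~\eqref{eq:keef}), so the identification $V=(S^T)^{-1}$ has to be interpreted on the nondegenerate quotient and the matrix relations in (c) must be verified as equalities of bilinear forms rather than derived through inverting $S$ directly.
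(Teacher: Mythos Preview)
Your expansion of (a) and (b) is correct and is precisely what the paper leaves implicit: the paper's own proof says only that ``(a) is clear'' and that ``(b) and (c) follow by similar argument as in the classical case, see e.g.\ the survey \cite[(3.15)]{Nem-Eger}.'' Your verification of each arrow in (a) and your intersection-theoretic chase in (b) are the standard unwinding of that classical argument, and your caution about the push-off direction is exactly the point where care is required.

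For (c), however, your derivation of the monodromy identity has a genuine circularity that your closing paragraph flags but does not repair. You invoke the HVS relation $V\circ b_\infty=h_\infty-\Id$ together with $V=(S^T)^{-1}$, but Lemma~\ref{lem:hvsofsing} and the remark after it are stated only for \emph{local} isolated singularities, where the variation map is automatically an isomorphism. At this point in the paper the Seifert form $S$ for the link at infinity need not be non-degenerate (that is precisely condition~(S2), introduced \emph{after} this proposition), so $(S^T)^{-1}$ is not available, and you cannot appeal to a simple HVS to bootstrap the identity. The classical argument referenced by the paper proceeds in the opposite direction: one proves the matrix identities $b_\infty=-\varepsilon S-S^T$ and $-\varepsilon S^T h_\infty=S$ directly from the geometry of the fibration (via the push-off description of $b_\infty$, which you already have, and a push-around-the-circle description of $h_\infty$ relating $\lk(\alpha,(h_\infty\beta)^+)$ to $\lk(\alpha,\beta^-)$), with no invertibility hypothesis; only afterwards does one observe that, \emph{if} $S$ happens to be non-degenerate, these identities package into a simple HVS. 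So your route for $b_\infty$ is fine, but for $h_\infty$ you should replace the HVS shortcut by the direct linking-number computation that works regardless of degeneracy.
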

\begin{proof} Part (a) is clear,
 while part (b) and (c) follow by similar
argument as in the classical case, see e.g. the survey \cite[(3.15)]{Nem-Eger}.
\end{proof}

 \begin{definition}\label{def:stame}
$P$ is called  \emph{spectrally tame} if the following conditions are satisfied:
\begin{itemize}
 \item[(S1)]  $\widetilde{H}_{n-1}(X_c,\mathbb{Q})=0$;
\item[(S2)] the natural inclusion induces an isomorphism
$$H_n(X_c\cap B_R) \to H_n((X_c\cap B_R)\cup (P^{-1}(D)\cap \partial B_R));$$
\item[(S3)] the spectrum of the HVS associated with the link $M_c$ (and the Seifert form
  $S$) is the mod\,2 reduction of the MHS spectrum of $P$ at infinity.
\end{itemize}
\end{definition}
Note that the conditions are `only' (co)homological,
a fact which allows more possibilities for their verifications and for applications. By excision and
the long homological
exact sequence of the pair, $(S2)$ is the consequence of the vanishings (for $c\in\partial D$)
$$\mathrm{(S2')} \hspace{2cm}
H_q(P^{-1}(D)\cap \partial B_R, P^{-1}(c)\cap \partial B_R)=0 \ \ \ \mbox{for $q=n, n+1$.}\hspace{1.5cm}$$
By Proposition \ref{prop:Seinondeg}(b), (S2) is equivalent with the non--degeneracy of the Seifert form $S$.

\begin{remark}
If all the fibers of $P$ have only  isolated singularities, and $P$ is regular at infinity
then  conditions (S1) and $\mathrm{(S2')}$ are satisfied, see e.g.  \cite{Nem-Japan}.
For (S3) one needs additionally the fact that the MHS at infinity is polarized by the
intersection form (for monodromy eigenvalues $\lambda\not=1$) and by the Seifert form
(for eigenvalue  $\lambda=1$). For $\lambda\not=1$ this usually follows from the standard
properties of a projectivization/compactification  of the fibers of $P$ and the Hodge--Riemann
polarization properties, while for $\lambda=1$  one needs an additional Thom--Sebastiani type
argument (that is adding e.g. $z^N$, where $z$ is a new variable) in order to reduce the situation to
the $\lambda\not=1$ case. This means that $P+z^N$ also should
satisfy certain/similar regularity  conditions at infinity.
\end{remark}

\begin{proposition}\label{lem:star}
The  conditions (S1)--(S3)  are guaranteed for
several `tame' polynomials present in the literature:

(a) \ $*$--polynomials \cite{GN0};

(b) \ M--tame polynomials \cite{NZ1};

(c) \ cohomologically tame polynomials \cite{Sa}.
\end{proposition}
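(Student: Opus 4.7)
The plan is to treat the three classes in parallel, verifying (S1)--(S3) by invoking, in each case, the appropriate ``Milnor package at infinity'' established in the corresponding references. The common thread is that each of these three notions of tameness is designed precisely so that the behaviour of $P$ near infinity mimics that of an isolated hypersurface singularity germ, transferring local topological and Hodge-theoretic tools to the global situation.

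For (S1) and (S2), I would invoke the Lefschetz-type statements proved in \cite{GN0}, \cite{NZ1} and \cite{Sa} for each respective class, which show that the generic fiber $X_c$ has the homotopy type of a bouquet of $n$-spheres; this gives (S1) immediately. These references also establish that $P$ restricts to a locally trivial fibration in a neighbourhood of $\partial B_R$ over a disc containing the bifurcation locus (this is essentially the input used in Proposition~\ref{prop:newSeifert}). Combining that fibration structure with the middle-dimensional concentration of the homology of the fiber, one sees that $H_q(P^{-1}(D)\cap \partial B_R,\ P^{-1}(c)\cap \partial B_R)$ vanishes for $q=n,n+1$, which is (S2$'$), and hence (S2); equivalently, by Proposition~\ref{prop:Seinondeg}(b), the Seifert form $S$ is non-degenerate.

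For (S3), which I expect to be the main obstacle, the Seifert form $S$ on $H_n(\Sigma,\ZZ)$ provided by Proposition~\ref{prop:newSeifert} defines a simple HVS $\mv$ on the vanishing cohomology at infinity. The strategy is to import the polarization argument of \cite{Nem2} that underlies the local Proposition~\ref{prop:mod2red}: proving that the spectrum of $\mv$ coincides with the mod\,2 reduction of the MHS spectrum of $P$ at infinity reduces to showing that the MHS at infinity is polarized on its $\lambda\neq 1$ part by the intersection form $b_\infty$ and on its $\lambda=1$ part by $S$. For the former, I would pass to a suitable projective compactification of the fibers of $P$ and apply the Hodge--Riemann relations; the ``regularity at infinity'' built into each of the three tameness conditions ensures that the compactification behaves well enough for the standard Hodge-theoretic argument to go through.

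For the $\lambda=1$ polarization, I would invoke the Thom--Sebastiani reduction already indicated in the remark preceding the statement, namely replacing $P(x_0,\dots,x_n)$ by $P+z^N$ with $z$ a new variable and $N$ large and coprime to the order of the monodromy; the resulting polynomial has monodromy eigenvalues at infinity obtained by multiplying those of $P$ by primitive $N$-th roots of unity, which moves the eigenvalue $1$ off the unit circle argument and reduces the $\lambda=1$ case to the already-settled $\lambda\neq 1$ case. The technical point hidden here, and the part where I expect the main work to lie, is checking that $P+z^N$ still satisfies the defining tameness hypothesis of the class under consideration; this is essentially routine for $*$-polynomials and M-tame polynomials via Newton-polytope considerations, and for cohomologically tame polynomials it follows from the behaviour of Sabbah's tameness condition under the addition of a variable.
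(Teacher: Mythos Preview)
Your proposal is correct and follows precisely the strategy that the paper itself sketches in the remark immediately preceding the proposition. The paper's own proof, however, is purely bibliographic: it does not carry out any of the steps you outline but simply delegates them, citing \cite{GN0} and \cite[Section~5]{GN} for case~(a), \cite{NZ1,NZ2} and \cite{NS} for case~(b), and \cite{Sa,NS} for case~(c). So the difference is one of granularity rather than of method: you are unpacking what those references do, while the paper just points to them.

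Two small corrections to your sketch. First, for (S3) the paper singles out specific sources where the polarization and Thom--Sebastiani arguments are actually carried through: \cite[Section~5]{GN} for $*$--polynomials and \cite{NS} for the M--tame and cohomologically tame cases. These are the right places to cite; re-deriving the argument from scratch is unnecessary here. Second, M--tameness is a gradient (Milnor--type) condition, not a Newton--polytope condition, so ``Newton--polytope considerations'' is not the correct justification for the stability of M--tameness under $P\mapsto P+z^N$; the needed stability is handled in \cite{NS} (and the related Thom--Sebastiani machinery in \cite{DN}). Also, a minor slip: adding $z^N$ does not move the eigenvalue $1$ ``off the unit circle''---all eigenvalues remain on $S^1$; the point is only that $1$ is replaced by primitive $N$-th roots of unity, reducing to the $\lambda\neq 1$ case.
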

\begin{proof} (a) The topological part follows from \cite{GN0}, (S3) from \cite[section 5]{GN}.
(b) The topological part follows from \cite{NZ1,NZ2}, the Hodge theoretical part from \cite{NS}.
(c) follows from \cite{Sa,NS}.
\end{proof}
An immediate application of (S1)--(S2) is the following
\begin{lemma}\label{lem:bettisum}
With the notations of \ref{def:stame},
let $\Sigma\cup X_c$ be the smooth closed manifold obtained from $\Sigma$ and $X_c\cap B_R$ by gluing them
together along their boundary $M_c$. Then
 imply $b_n(  \Sigma\cup X_c)=b_n(\Sigma)+b_n( X_c)$.
\end{lemma}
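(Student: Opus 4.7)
The plan is to run a Mayer--Vietoris computation on the closed oriented $2n$--manifold $W:=\Sigma\cup_{M_c}(X_c\cap B_R)$ with $\mathbb{Q}$ coefficients, and force the boundary contributions to cancel via Lefschetz duality on $\Sigma$ and Poincar\'e duality on $M_c$.

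First I would collect the needed vanishings. By Proposition~\ref{prop:newSeifert} the Seifert surface $\Sigma$ is diffeomorphic to $X_c\cap B_R$, and the latter is homotopy equivalent to $X_c$ for $R$ sufficiently large; combined with (S1) this gives $\widetilde{H}_{n-1}(\Sigma;\mathbb{Q})=\widetilde{H}_{n-1}(X_c\cap B_R;\mathbb{Q})=0$. Lefschetz duality on $(\Sigma,\partial\Sigma)$ then yields the dual vanishing
$$H_{n+1}(\Sigma,M_c;\mathbb{Q})\cong H^{n-1}(\Sigma;\mathbb{Q})\cong H_{n-1}(\Sigma;\mathbb{Q})^{*}=0.$$

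Second, I would write down the Mayer--Vietoris sequence for the cover $\{\Sigma,X_c\cap B_R\}$ of $W$ with intersection $M_c$,
$$H_n(M_c)\xrightarrow{\alpha}H_n(\Sigma)\oplus H_n(X_c\cap B_R)\xrightarrow{\beta}H_n(W)\xrightarrow{\partial}H_{n-1}(M_c)\xrightarrow{\gamma}H_{n-1}(\Sigma)\oplus H_{n-1}(X_c\cap B_R),$$
and extract two observations. The final group vanishes by the previous step, so $\gamma=0$ and $\partial$ is surjective onto $H_{n-1}(M_c)$. Injectivity of $\alpha$ reduces to injectivity of its first coordinate $H_n(M_c)\to H_n(\Sigma)$, which follows immediately from the long exact sequence of the pair $(\Sigma,M_c)$ because the preceding term $H_{n+1}(\Sigma,M_c)$ vanishes by the duality established above.

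Third, I would simply count dimensions. Exactness combined with these two observations yields
$$b_n(W)=b_n(\Sigma)+b_n(X_c\cap B_R)-b_n(M_c)+b_{n-1}(M_c),$$
and Poincar\'e duality on the closed oriented $(2n-1)$--manifold $M_c$ gives $b_n(M_c)=b_{n-1}(M_c)$, so the two correction terms cancel and the asserted equality drops out.

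The main obstacle, in my view, is not the Mayer--Vietoris arithmetic itself but justifying the preparatory identifications that propagate (S1) to both halves of the decomposition: specifically, invoking Proposition~\ref{prop:newSeifert} to identify $\Sigma$ with $X_c\cap B_R$ up to diffeomorphism, and the tameness of $P$ (of which (S2) is an ingredient, being equivalent by Proposition~\ref{prop:Seinondeg}(b) to non-degeneracy of the Seifert form) to replace $X_c\cap B_R$ by $X_c$ up to homotopy. Once these identifications are in place, the remainder of the argument is purely formal.
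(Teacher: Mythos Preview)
Your Mayer--Vietoris argument is correct. The paper takes a slightly different and shorter route: it works with the long exact sequence of the pair $(\Sigma\cup X_c,\Sigma)$ together with the excision $H_*(\Sigma\cup X_c,\Sigma)\cong H_*(X_c,M_c)$. Lefschetz duality on $X_c\cap B_R$ then gives $H_{n+1}(X_c,M_c)\cong H^{n-1}(X_c)=0$ and $H_n(X_c,M_c)\cong H^n(X_c)$, so the sequence collapses to a short exact sequence $0\to H_n(\Sigma)\to H_n(\Sigma\cup X_c)\to H_n(X_c,M_c)\to 0$ with the last term already of dimension $b_n(X_c)$. In particular the paper never sees $H_*(M_c)$ explicitly, whereas your computation introduces the correction $-b_n(M_c)+b_{n-1}(M_c)$ and then cancels it via Poincar\'e duality on $M_c$. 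The essential inputs---the vanishing coming from (S1) and a single application of Lefschetz duality---are the same in both arguments, so the difference is organizational rather than conceptual; the paper's version simply has one fewer bookkeeping step. One minor remark on your closing paragraph: the homotopy equivalence $X_c\cap B_R\simeq X_c$ for large $R$ follows from the transversality built into the setup (large spheres meet $X_c$ transversally) and does not actually require (S2), so you are slightly overstating its role there, though this does not affect the validity of your proof.
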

\begin{proof}
In the long homological exact sequence of the pair $(\Sigma\cup X_c,\Sigma)$ one has
$H_{n+1}(\Sigma\cup X_c,\Sigma)=H_{n+1}(X_c,M_c)=H^{n-1}(X_c)=0$ and
$H_{n-1}(\Sigma)=H_{n-1}(X_c)=0$.
\end{proof}

\section{Semicontinuity results}\label{s:4}
\subsection{Local case}\label{ss:local}
Let $P_t\colon(\mathbb{C}^{n+1},0)\to(\mathbb{C},0)$ be a family of germs of
analytic maps depending smoothly on a parameter $t\in (\mathbb{C},0)$.
Assume that $P_0^{-1}(0)$ has an isolated singularity at $0\in\mathbb{C}^{n+1}$. Let us
 fix a small Milnor ball $B\subset\mathbb{C}^{n+1}$ centered at $0$, and
let $h_0$ be the homological monodromy operator of the Milnor fibration of $P_0$.

\begin{theorem}\label{thm:semicloc}
For any $t$ with  $0<|t|\ll 1$, let $x_1,\dots,x_k$ be all the singular points of $P_t^{-1}(0)\cap B$,
and $\alpha\in [0,1]$ is chosen so that $\xi=e^{2\pi i \alpha}$ is not an eigenvalue of $h_0$. Then
\begin{equation}\label{eq:semicon}
\begin{split}
|\spt_0\cap(\alpha,\alpha+1)|&\geqslant \sum_{j}\ |\spt_j\cap(\alpha,\alpha+1)|,\\
|\spt_0\setminus[\alpha,\alpha+1]|&\geqslant \sum_{j}\ |\spt_j\setminus[\alpha,\alpha+1]|,
\end{split}
\end{equation}
where $\spt_0$ (respectively $\spt_j$) is the mod\,2 reduction of the MHS--spectrum  associated with
the singularity at $0$ of $P_0$ (respectively with the singularity $x_j$ of $P_t$).
\end{theorem}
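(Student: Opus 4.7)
The plan is to combine the boleadoras inequality of Proposition~\ref{prop:bol} with the signature--spectrum dictionary provided by Propositions~\ref{prop:sing} and \ref{prop:mod2red}. For a fixed $t$ with $0<|t|\ll 1$, let $M:=P_t^{-1}(0)\cap\partial B$; by transversality this link is isotopic to the link $M_0:=P_0^{-1}(0)\cap\partial B$ of the singularity at the origin. Let $\Sigma_0$ denote the Milnor fiber of $P_0$. It serves simultaneously as a Seifert surface for $M_0$ (hence, up to isotopy, for $M$) and, via local triviality of the family $(z,t)\mapsto P_t(z)$ over the complement of the discriminant, as the smoothing $X^s:=P_t^{-1}(\delta)\cap B$ for a small regular value $\delta$. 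Applying Proposition~\ref{prop:bol} to $X:=P_t^{-1}(0)$ with $\Sigma:=\Sigma_0$ and the local Milnor fibers $\Sigma_j$ at the $x_j$ yields
\[
\Big|\sigma_{M_0}(\xi)-\sum_j\sigma_{M_{x_j}}(\xi)\Big|\leqslant b_n(X^s\cup_M\Sigma_0)-b_n(\Sigma_0)-\sum_j b_n(\Sigma_j)+n_{M_0}(\xi)+\sum_j n_{M_{x_j}}(\xi).
\]

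I would next verify that the Betti number contribution on the right vanishes. Since both $X^s$ and the Seifert surface are identified with $\Sigma_0$, their union along $M_0$ is the double $D\Sigma_0$. Using the $(n-1)$--connectedness of $\Sigma_0$ together with Lefschetz duality applied to the pair $(\Sigma_0,M_0)$, a short Mayer--Vietoris computation gives $b_n(D\Sigma_0)=2\mu_0$, where $\mu_0:=b_n(\Sigma_0)$ is the Milnor number of $P_0$. Conservation of the Milnor number under the deformation (i.e.\ $\mu_0=\sum_j\mu_j$ with $\mu_j:=b_n(\Sigma_j)$) then yields
\[
b_n(X^s\cup_M\Sigma_0)-b_n(\Sigma_0)-\sum_j b_n(\Sigma_j)=2\mu_0-\mu_0-\sum_j\mu_j=0.
\]
Moreover, by the hypothesis on $\xi$ and the non-degeneracy of the Seifert form of the Milnor fibre, Remark~\ref{rem:nulnonzero} forces $n_{M_0}(\xi)=0$.

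The main technical obstacle is that $\xi$ may well be an eigenvalue of some local monodromy $h_j$, so Proposition~\ref{prop:sing} cannot be applied at $\xi$ to the local singularities. I would bypass this by a perturbation: choose $\alpha^-:=\alpha-\varepsilon$ for $\varepsilon>0$ so small that $\xi^-:=e^{2\pi i\alpha^-}$ avoids every eigenvalue of $h_0$ and of each $h_j$. All nullities vanish at $\xi^-$, so the boleadoras inequality collapses to the equality $\sigma_{M_0}(\xi^-)=\sum_j\sigma_{M_{x_j}}(\xi^-)$. By Lemma~\ref{lem:sigmaalex} the signature $\sigma_{M_0}$ is locally constant at $\xi$, hence $\sigma_{M_0}(\xi^-)=\sigma_{M_0}(\xi)$; similarly $\alpha,\alpha+1\notin\spt_0$, so $|\spt_0\cap(\alpha^-,\alpha^-+1)|=|\spt_0\cap(\alpha,\alpha+1)|$ for $\varepsilon$ small.

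Putting everything together via Propositions~\ref{prop:mod2red} and \ref{prop:sing} at $\xi^-$ applied both to $P_0$ and to each local singularity, and using $\mu_0=\sum_j\mu_j$,
\[
|\spt_0\cap(\alpha,\alpha+1)|=\tfrac{1}{2}(\mu_0-\sigma_{M_0}(\xi^-))=\sum_j\tfrac{1}{2}(\mu_j-\sigma_{M_{x_j}}(\xi^-))=\sum_j|\spt_j\cap(\alpha^-,\alpha^-+1)|.
\]
A direct book--keeping of multiplicities shows that shifting $\alpha^-$ back to $\alpha$ can only decrease each summand on the right (the small leftward shift gains precisely the multiplicity of $\alpha$ in $\spt_j$ and, for $\varepsilon$ small enough, no other spectral numbers). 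This yields the first semicontinuity inequality, while the second one for $\spt\setminus[\alpha,\alpha+1]$ is obtained verbatim using the second identity of Proposition~\ref{prop:sing}. The delicate step throughout is this perturbation argument used to cope with possible eigenvalues of the local monodromies.
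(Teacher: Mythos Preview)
Your overall strategy---apply Proposition~\ref{prop:bol}, identify both $X^s$ and the Seifert surface with the Milnor fiber $\Sigma_0$ of $P_0$, then translate via Proposition~\ref{prop:sing} and handle the eigenvalue issue by a small perturbation of $\alpha$---is exactly the paper's. The computation $b_n(X^s\cup_M\Sigma_0)=2\mu_0$ is also correct and coincides with the paper's use of the local analogue of Lemma~\ref{lem:bettisum}.

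There is, however, a genuine error. The claimed ``conservation of the Milnor number'' $\mu_0=\sum_j\mu_j$ is false in this setting. Conservation holds when one sums over \emph{all} critical points of $P_t$ in $B$, but the $x_j$ are only those critical points lying on the zero fiber $P_t^{-1}(0)$; critical points with nonzero critical value are lost. (For instance, $P_t(x,y)=x^2+y^3-ty$ has $\mu_0=2$ while $P_t^{-1}(0)$ is smooth for $t\neq 0$, so $\sum_j\mu_j=0$.) Consequently the right-hand side of the boleadoras inequality is $\mu_0-\sum_j\mu_j$, which is nonnegative but in general strictly positive, and your ``collapse to equality'' $\sigma_{M_0}(\xi^-)=\sum_j\sigma_{M_{x_j}}(\xi^-)$ does not hold. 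The displayed chain of equalities for $|\spt_0\cap(\alpha,\alpha+1)|$ is therefore wrong; indeed it would force equality in \eqref{eq:semicon}, which the example above already contradicts.

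The fix is minor and is precisely what the paper does: keep the term $\mu_0-\sum_j\mu_j$ on the right, drop the absolute value in the appropriate direction to obtain
\[
-\sigma_{M_0}(\xi^-)+\mu_0\ \geqslant\ \sum_j\bigl(-\sigma_{M_{x_j}}(\xi^-)+\mu_j\bigr),
\]
and then apply Proposition~\ref{prop:sing} on each side separately (no need to match $\mu_0$ with $\sum_j\mu_j$). Your perturbation argument from $\alpha^-$ back to $\alpha$ then finishes the proof just as you wrote.
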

\begin{proof}
Let $\LS_{x_j}$ be the link of $(P_t^{-1}(0),x_j)$ with Seifert surface (i.e., Milnor fiber)
 $\Sigma_j$, and let $M$ be the link of $(P_0^{-1}(0),0)$ with Seifert surface
$\Sigma $ in $\partial B$.

First assume that $\alpha$ is chosen so that $\xi=e^{2\pi i\alpha}$ is  not
 an eigenvalue of the homological monodromy operators $h_j$ of $(P_t^{-1}(0),x_j)$ for  neither $j=1,\dots,k$.
Then, the nullities $n_{\LS_{x_j}}(\xi)$ and $n_{M}(\xi)$
are all zero (see Remark~\ref{rem:nulnonzero}). Then we apply Proposition~\ref{prop:bol} to obtain
\[\Big|\sigma_{M}(\xi)-\sum_{j}\sigma_{\LS_{x_j}}(\xi)\Big|\leqslant b_n(X^s\cup \Sigma)-b_n(\Sigma)-\sum_{j}b_n( \Sigma_j),\]
where $X^s$ is a smoothing (nearby smooth fiber) of $f^{-1}_0(0)\cap B$.
By removing the absolute value sign, and using $b_n(X^s)=b_n(\Sigma)$ (by \cite{Milnor-book})
and  $b_n(X^s)+b_n(\S)=b_n(X^s\cup\S)$ (by the local analogue of Lemma~\ref{lem:bettisum} proved in the same way)
we obtain
\begin{equation}\label{eq:stepto}
-\sigma_{M}(\xi)+b_n( \Sigma)\geqslant \sum_{j}(-\sigma_{\LS_{x_j}}(\xi)+b_n( \Sigma_j)).
\end{equation}
In this way, via Proposition~\ref{prop:sing}, we prove the first inequality of \eqref{eq:semicon}.
The second one is proved if we
remove the absolute value sign in the other way.

If $\xi=e^{2\pi i\alpha}$ happens to be an eigenvalue of $h_j$ for some $j$,
 but it is not an eigenvalue of $h_0$,
then for all $\alpha'$ sufficiently close to $\alpha$, $e^{2\pi i\alpha'}$ is not an eigenvalue
neither of $h_0$, nor of any $h_j$. We can deduce the inequality \eqref{eq:semicon} for $\alpha$
from the fact that it holds for $\alpha'$ and that the function
$\alpha\to |\spt_j\cap (\alpha,\alpha+1)|$ is lower semicontinuous.
\end{proof}

\subsection{Semicontinuity at infinity}\label{ss:inf}
We prove topologically that the mod\,2 reduction of the result of \cite{NS} is valid for polynomial maps which are `tame' at infinity.
\begin{theorem}\label{thm:semicinf}
Let $P_t$ be a smooth family of spectrally tame polynomial maps, where $t\in ({\mathbb C},0)$.
Then, for any $t$ with  $0<|t|\ll 1$, and for any $\alpha\in [0,1]$ such
that $\xi=e^{2\pi i \alpha}$ is not a root of the homological
monodromy operator at infinity of $P_t$, we have
\begin{align*}
|\spt_t\cap(\alpha,\alpha+1)|\, \geqslant\,&\,|\spt_0\cap (\alpha,\alpha+1)|\\
|\spt_t\setminus[\alpha,\alpha+1]|\, \geqslant\,&\,|\spt_0\setminus [\alpha,\alpha+1]|,
\end{align*}
where $\spt_t$ denotes the mod\,2 spectrum of the MHS at infinity associated with $P_t$.
\end{theorem}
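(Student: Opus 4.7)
The plan is to mirror the proof of the local case (Theorem~\ref{thm:semicloc}), applied at infinity: build a cobordism between the two links at infinity $M_{c,0}$ and $M_{c,t}$ from the family $P_s$, compute the Betti number of the associated glued manifold via a bounding $(2n+1)$-manifold, feed the result into Theorem~\ref{thm:murineq}, and translate the resulting signature bound into a bound on spectra via Proposition~\ref{prop:sing}. Concretely, I would first fix $R\gg 0$ and a large closed disc $D\subset\mathbb{C}$ so that, for every $s\in[0,|t|]$, the bifurcation values of $P_s$ lie in $\intr D$ and $\partial B_R$ is transverse to $P_s^{-1}(c)$ for every $c\in\partial D$; smooth dependence on $s$ together with compactness of $[0,|t|]$ make this possible. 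Fix $c\in\partial D$ and set $F(z,s):=P_s(z)-c$. Then $Y:=F^{-1}(0)\subset\partial B_R\times[0,|t|]$ is a smooth cobordism from $M_{c,0}$ to $M_{c,t}$, Assumption~\ref{ass:ass} is satisfied for $F$ exactly as in Remark~\ref{rem:ass}(a), and for a generic $\delta\in S^1$ the $(2n+1)$-manifold $\Omega:=(\arg F)^{-1}(\delta)$ has boundary the closed $2n$-manifold $\Sigma_0\cup_{M_{c,0}}Y\cup_{M_{c,t}}\Sigma_t$, with $\Sigma_i$ the Seifert surfaces of Proposition~\ref{prop:newSeifert}.

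The technical core is the identity $b_n(\partial\Omega)=b_n(\Sigma_0)+b_n(\Sigma_t)$. Because $c$ is a common regular value and $R$ works uniformly in $s$, the projection $\Omega\to[0,|t|]$ is a smooth fibre bundle with fibre $\Sigma_s\approx X_{c,s}\cap B_R$, so $\Omega\simeq X_{c,0}\cap B_R$. Combining the Stein property of $X_{c,0}$ with hypothesis (S1) of Definition~\ref{def:stame} gives $H_k(\Omega;\mathbb{Q})=0$ for $0<k<n$ and for $k>n$, while $H_n(\Omega;\mathbb{Q})\cong\mathbb{Q}^\mu$ with $\mu:=b_n(X_{c,0})$. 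Poincar\'e--Lefschetz duality then yields $H_{n+1}(\Omega,\partial\Omega;\mathbb{Q})\cong\mathbb{Q}^\mu$ and $H_n(\Omega,\partial\Omega;\mathbb{Q})=0$, and the long exact sequence of $(\Omega,\partial\Omega)$ collapses to a short exact sequence $0\to\mathbb{Q}^\mu\to H_n(\partial\Omega;\mathbb{Q})\to\mathbb{Q}^\mu\to 0$, giving $b_n(\partial\Omega)=2\mu=b_n(\Sigma_0)+b_n(\Sigma_t)$.

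Theorem~\ref{thm:murineq} then specialises to $|\sigma_{M_{c,0}}(\xi)-\sigma_{M_{c,t}}(\xi)|\leq n_{M_{c,0}}(\xi)+n_{M_{c,t}}(\xi)$. The hypothesis that $\xi$ is not a monodromy eigenvalue of $P_t$, combined with (S2) and Lemma~\ref{lem:sigmaalex}, forces $n_{M_{c,t}}(\xi)=0$. If $\xi$ is also not an eigenvalue for $P_0$, both nullities vanish, the two signatures coincide, and Proposition~\ref{prop:sing} applied to the two HVS at infinity (of common dimension $\mu$), combined with hypothesis (S3) identifying the HVS spectrum with $\spt$, delivers both inequalities as equalities in this case. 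For the general case I would pick $\alpha'$ close to $\alpha$ with $e^{2\pi i\alpha'}$ not a monodromy eigenvalue of either polynomial, apply the previous step at $\alpha'$, and pass to the limit; exactly as in the proof of Theorem~\ref{thm:semicloc}, lower semicontinuity of $\alpha\mapsto|\spt_0\cap(\alpha,\alpha+1)|$ and of $\alpha\mapsto|\spt_0\setminus[\alpha,\alpha+1]|$, combined with continuity of the corresponding functions for $P_t$ at the point $\alpha$ (which holds because $\xi$ is not an eigenvalue for $P_t$), yields both desired inequalities.

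The main obstacle is the Betti-number identity $b_n(\partial\Omega)=b_n(\Sigma_0)+b_n(\Sigma_t)$ of the second paragraph. What makes it come out cleanly is the fibre-bundle presentation of $\Omega$ over the parameter interval together with the vanishings $H_{n\pm 1}(\Omega;\mathbb{Q})=0$ supplied precisely by spectral tameness; conditions (S1)--(S2) of Definition~\ref{def:stame} seem designed exactly to make such a computation work. Once this identity is in place, the rest of the argument is a direct application of the Murasugi--Kawauchi inequality together with the HVS--spectrum dictionary already developed in the preceding two sections.
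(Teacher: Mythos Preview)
Your construction of the cobordism is genuinely different from the paper's, and this difference matters. You build a \emph{parametrised} cobordism $Y\subset \partial B_R\times[0,|t|]$ using a single radius $R$ that is meant to work simultaneously for every $P_s$. The paper instead fixes the ball $B$ only for $P_0$, observes that for $0<|t|\ll 1$ the link $X_t\cap\partial B$ is isotopic to the link at infinity of $P_0$, then chooses a \emph{second}, possibly much larger ball $B_t\supset B$ adapted to $P_t$, and takes the cobordism $Y=X_t\cap(B_t\setminus\intr B)$ inside the single smooth fibre $X_t$. Gluing the Seifert surfaces one obtains $(X_t\cap B_t)\cup_{M_t}\Sigma_t$, whose $n$-th Betti number is $2\mu_t$ by Lemma~\ref{lem:bettisum}, and Theorem~\ref{thm:murineq} then yields
\[
|\sigma_{M_0}(\xi)-\sigma_{M_t}(\xi)|\leqslant \mu_t-\mu_0+n_{M_0}(\xi)+n_{M_t}(\xi),
\]
which is exactly what is needed for Proposition~\ref{prop:sing} to produce the inequalities.

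The gap in your approach is the sentence ``smooth dependence on $s$ together with compactness of $[0,|t|]$ make this possible''. What you actually need is that a single $R$ is \emph{large enough} for every $P_s$ in the sense of Proposition~\ref{prop:newSeifert} (transversality for all $R'\geqslant R$), not merely that $\partial B_R$ is transverse to the fibres. The hypothesis ``smooth family of spectrally tame polynomials'' only says that each $P_s$ is individually spectrally tame; it does not give uniform control at infinity. Indeed, if such a uniform $R$ existed, your fibre-bundle argument would force $b_n(\Sigma_s)$ to be constant in $s$, hence $\mu_t=\mu_0$, and your Murasugi--Kawauchi bound would collapse to $|\sigma_{M_0}-\sigma_{M_t}|\leqslant n_{M_0}+n_{M_t}$, making both spectral inequalities \emph{equalities}. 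But the theorem is stated as a genuine inequality precisely because $\mu_t>\mu_0$ can occur (for instance when the top-degree part of $P_t$ degenerates at $t=0$); in that regime no uniform $R$ exists, your parametrised $M_{c,t}\subset\partial B_R$ is not the link at infinity of $P_t$, and your $\Sigma_t$ does not compute the HVS of $P_t$. Your Poincar\'e--Lefschetz computation of $b_n(\partial\Omega)$ is clean and correct on its own terms, but it is answering the wrong question when the radii must differ; the paper's two-ball cobordism is what absorbs the discrepancy $\mu_t-\mu_0$.
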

\begin{proof}
We fix a regular fiber $X_0=P_0^{-1}(c)$ of $P_0$.
Let $B$ be a large ball so that $X_0\cap\partial B=M_{\Sigma_0}$ is the link at infinity of $P_0$.
For any $t$ with  $0<|t|\ll 1$, the intersection of $X_t=P_t^{-1}(c)$ with $\partial B$ is isotopic to $M_{\Sigma_0}$. After perturbing $c$, if necessary, we can assume that
$X_t$ is smooth. Let $B_t\supset B$ be a ball such that $X_t\cap\partial B_t$ is the link  at infinity of $X_t$. Then
\[Y=X_t\cap (B_t\setminus \intr B)\]
realizes a cobordism between $M_{\Sigma_0}$ and $M_{\Sigma_t}$.
 The proof is similar to the proof of Theorem~\ref{thm:semicloc} based on Assumptions (S1)--(S4)
 and Proposition~\ref{prop:bol}.
\end{proof}

\subsection{Local to global case}\label{ss:ltg}
We can also compare the mod\,2 spectrum  of all the local
singularities of a fixed fiber with the spectrum at infinity of a spectrally tame polynomial.

\begin{theorem}\label{thm:semiclocinf}
Let $P\colon\mathbb{C}^{n+1}\to\mathbb{C}$ be a spectrally tame polynomial.
Let $X$ be one of its fibers with (isolated) singular points  $x_1,\dots,x_k$.
We denote by $\LS_{x_1},\dots,\LS_{x_k}$ the corresponding links of local singularities and
$\spt_1,\dots,\spt_k$ the mod\,2 reduction of their MHS--spectrum.  Let
$\spt_\infty$ be the mod\,2 reduction of the MHS--spectrum at infinity of $P$.
Then, if $\alpha\in [0,1]$ is chosen so that $\xi=e^{2\pi i\alpha}$
is not an eigenvalue of the monodromy of $P$ at infinity, then
\begin{align*}
 |\spt_\infty\cap(\alpha,\alpha+1)|&\geqslant \sum_{j}\ |\spt_j\cap (\alpha,\alpha+1)|\\
|\spt_\infty\setminus[\alpha,\alpha+1]|&\geqslant \sum_{j}\ |\spt_j\setminus [\alpha,\alpha+1]|.
\end{align*}
\end{theorem}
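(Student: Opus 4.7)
The plan is to run the same argument as in Theorem~\ref{thm:semicloc}, but with the small Milnor ball replaced by a large ball $B_R$ whose boundary sphere carries the link at infinity. Concretely, I would fix the chosen fiber $X=P^{-1}(c_0)$ with singular points $x_1,\dots,x_k$, choose $B_R$ so large that $\p B_R$ is transverse to every fiber $X_c$ for $c$ in a neighborhood of $D$ and $x_1,\dots,x_k\in\intr B_R$, and identify $M:=X\cap\p B_R$ with the link at infinity of $P$. By Proposition~\ref{prop:newSeifert}, $M$ admits a Seifert surface $\Sigma\subset\p B_R$ diffeomorphic to a generic fiber $X_{c_1}\cap B_R$, and since $\Sigma$ is cut out by $\arg P$, Assumption~\ref{ass:ass} holds automatically (Remark~\ref{rem:ass}(a)).

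The central input is the boleadoras inequality of Proposition~\ref{prop:bol}, applied to $X\subset B_R$. The smoothing $X^s$ of $X$ in $B_R$ can be taken to be $X_{c'}\cap B_R$ for some regular value $c'$ close to $c_0$; since $P$ restricted to $B_R\cap P^{-1}(D\setminus\mathcal{B})$ is a locally trivial fibration (by transversality of $\p B_R$), such a smoothing is diffeomorphic to $\Sigma$, so $b_n(X^s)=b_n(\Sigma)$. Combined with spectral tameness and Lemma~\ref{lem:bettisum}, this yields $b_n(X^s\cup_M\Sigma)=b_n(X^s)+b_n(\Sigma)=2b_n(\Sigma)$. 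The hypothesis that $\xi$ is not an eigenvalue of the monodromy at infinity, together with the non-degeneracy of the Seifert form at infinity furnished by (S2) (see Proposition~\ref{prop:Seinondeg}), forces $n_M(\xi)=0$ via Lemma~\ref{lem:sigmaalex}. The inequality therefore collapses to
\[
\Bigl|\sigma_M(\xi)-\sum_j\sigma_{\LS_{x_j}}(\xi)\Bigr|\leqslant b_n(\Sigma)-\sum_jb_n(\Sigma_j)+\sum_jn_{\LS_{x_j}}(\xi).
\]

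To convert signatures into mod\,2 spectra I would use Proposition~\ref{prop:sing} at each local singularity and its analogue at infinity, the latter legitimized by condition (S3): since $\spt_\infty$ is by definition the mod\,2 reduction of the HVS spectrum attached to $M$, the same derivation gives $|\spt_\infty\cap(\alpha,\alpha+1)|=\tfrac12(b_n(\Sigma)-\sigma_M(\xi))$ and $|\spt_\infty\setminus[\alpha,\alpha+1]|=\tfrac12(b_n(\Sigma)+\sigma_M(\xi))$. Removing the absolute value in the two possible ways and substituting yields, respectively, the first and the second inequality of the theorem, under the auxiliary assumption that $\xi$ is not an eigenvalue of any local monodromy $h_j$ (so that the extra nullity terms vanish).

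The auxiliary assumption is removed exactly as at the end of the proof of Theorem~\ref{thm:semicloc}: approximate $\alpha$ by values $\alpha'$ such that $e^{2\pi i\alpha'}$ is an eigenvalue neither of any local $h_j$ nor of the monodromy at infinity, apply the previous step at $\alpha'$, and pass to the limit using lower semicontinuity of $\alpha\mapsto|\spt\cap(\alpha,\alpha+1)|$. I expect the delicate point to be the Betti-number identity $b_n(X^s\cup_M\Sigma)=2b_n(\Sigma)$: verifying that a smoothing of the singular fiber inside $B_R$ is really diffeomorphic (or at least has the same middle Betti number as) the Seifert surface produced by Proposition~\ref{prop:newSeifert} is precisely where spectral tameness, rather than pure topology of the cobordism, becomes indispensable.
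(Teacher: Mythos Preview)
Your proposal is correct and follows essentially the same route the paper intends: the paper's own proof is a one-line reference back to Theorem~\ref{thm:semicloc} with the Milnor ball replaced by a large ball, and you have simply unpacked that line, correctly invoking Proposition~\ref{prop:newSeifert}, condition~(S2) for $n_M(\xi)=0$, Lemma~\ref{lem:bettisum} for the Betti-number identity, and condition~(S3) to identify $\spt_\infty$ with the HVS spectrum so that the formula of Proposition~\ref{prop:sing} applies at infinity. The only quibble is phrasing: $\spt_\infty$ is \emph{by definition} the mod\,2 reduction of the MHS spectrum at infinity, and (S3) is what lets you equate it with the HVS spectrum of $M$, not the other way around.
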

\begin{proof}
 We follow closely the proof of Theorem~\ref{thm:semicloc} with the modification that now
 $B$ is a large ball such that $X\cap\partial B$ is the link at infinity of $P$.

\end{proof}

\end{document}